\newtheorem{introthm}{Theorem}
\newtheorem{intropro}{Proposition}
\newtheorem{theorem}{Theorem}[section]
\newtheorem{lemma}[theorem]{Lemma}
\newtheorem{proposition}[theorem]{Proposition}
\newtheorem{corollary}[theorem]{Corollary}
\theoremstyle{definition}
\newtheorem{example}[theorem]{Example}
\newtheorem{construction}[theorem]{Construction}
\newtheorem{remark}[theorem]{Remark}
\theoremstyle{remark}
\numberwithin{equation}{section}
\def\cc{{\mathbb K}}
\def\kk{k}
\def\kb{{\bar k}}
\def\pp{{\mathbb P}}
\def\Osh{{\mathcal O}}
\def\R{{\mathcal R}}
\def\Cl{\operatorname{Cl}}
\def\Gal{\operatorname{Gal}}
\def\id{\operatorname{id}}
\def\WDiv{\operatorname{WDiv}}
\def\PDiv{\operatorname{PDiv}}
\def\Spec{\operatorname{Spec}}
\def\cl{\operatorname{cl}}
\def\Frac{\operatorname{Frac}}
\def\Hom{\operatorname{Hom}}
\def\b1{\mathbbm{1}}
\renewcommand{\div}{\operatorname{div}}
\begin{document}

\title{Cox ring of the generic fiber}

\author[A.~Laface]{Antonio Laface}
\address{
Departamento de Matem\'atica,
Universidad de Concepci\'on,
Casilla 160-C,
Concepci\'on, Chile}
\email{alaface@udec.cl}

\author[L.~Ugaglia]{Luca Ugaglia}
\address{
Dipartimento di Matematica e Informatica,
Universit\`a degli studi di Palermo,
Via Archirafi 34,
90123 Palermo, Italy}
\email{luca.ugaglia@unipa.it}

\subjclass[2010]{Primary 14C20, 
Secondary
14M25 
}

\thanks{
The first author was partially supported 
by Proyecto FONDECYT Regular N. 1190777.
The second author is member of INdAM - GNSAGA.
Both authors have been partially supported 
by project Anillo ACT 1415 PIA Conicyt.
}

\keywords{Cox rings, fiber spaces}

\begin{abstract}
Given a surjective morphism $\pi\colon X\to Y$ of 
normal varieties satisfying some regularity hypotheses
we prove how to recover a Cox ring of the generic fiber 
of $\pi$ from the Cox ring of $X$.
As a corollary we show that in some cases
it is also possible to recover the Cox ring of a very 
general fiber, and finally we give an application in 
the case of the blowing-up of a toric fiber space.
\end{abstract}

\maketitle

\section*{Introduction}
Let $X$ be a normal variety defined over
an algebraically closed field $\mathbb K$ of 
characteristic zero.
If the divisor class group ${\rm Cl}(X)$ of $X$ 
is finitely generated and $\cc [X]^* = 
\cc^*$, i.e. the only global regular
invertible functions of $X$ are constants,
the {\em Cox sheaf} of $X$ can be defined as
(see~\cite{ADHL})
\[
 \mathcal R 
 := 
 \bigoplus_{[D]\in {\rm Cl}(X)}\mathcal O_X(D),
\]
while its {\em Cox ring} $\mathcal R(X)$ 
is the ring of global sections $\Gamma(X,\mathcal R)$.
Given a morphism $\pi\colon X\to Y$ of normal 
varieties defined over $\cc$, possible
relations between the Cox rings of $X$ and $Y$
have been recently studied in many cases 
(see for instance~\cites{ok,Bak,HaS,AHL,HKL}).
On the contrary, to our knowledge there are 
no results concerning relations with the Cox ring of the
fibers of $\pi$.
Trying to fill this gap, in the present paper we consider 
the problem of determining the Cox ring of the generic 
fiber $X_\eta$ (and in some cases also
the Cox ring of the very general fiber)
of $\pi$ from the Cox ring of $X$ and from the 
{\em vertical classes} of $\pi$, i.e. classes of divisors 
whose image in $Y$ is not dense.
Observe that since $X_\eta$ is defined over a 
non closed field (isomorphic to the function field $\cc(Y)$),
we need to define a Cox ring for $X_\eta$ following~\cite{dp}.

In order to describe our results let us 
denote by $\Cl_\pi(X)$ the subgroup 
of $\Cl(X)$ generated by classes 
of vertical divisors, or equivalently
the kernel of the surjection 
$\Cl(X)\to\Cl(X_\eta)$, induced by the
pull-back of the natural morphism $\imath\colon
X_\eta\to X$.
If we denote by $\R_\pi(X)$ the localization
of $\R(X)$ by the multiplicative subsystem
generated by the non-zero homogeneous 
elements $f\in\R(X)_w$, with $w\in \Cl_\pi(X)$,
and by $\Frac_0(\R(X))$
the field of degree zero 
homogeneous fractions on $\R(X)$,
the following holds.
\begin{intropro}
\label{main:propo}
The image of the homomorphism
$\cc(Y)\to \Frac_0(\R(X))$
induced by the pullback is $\R_\pi(X)_0$, 
the subset of degree zero homogeneous 
elements of $\R_\pi(X)$.
\end{intropro}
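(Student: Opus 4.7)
My plan is to prove the two inclusions between $\pi^*(\cc(Y))$ and $\R_\pi(X)_0$ separately, working inside the standard identification $\Frac_0(\R(X))=\cc(X)$. For the easier direction $\pi^*(\cc(Y)) \subseteq \R_\pi(X)_0$, I would take $f \in \cc(Y)^*$ and choose effective divisors $D_1,D_2$ on $Y$ with $\div(f)=D_1-D_2$, so that $f$ is realised as a section of $\Osh_Y(D_2)$. Its pullback $\pi^*f$ then lies in $H^0(X,\Osh_X(\pi^*D_2))$ and can be viewed as a homogeneous element of degree $[\pi^*D_2] \in \Cl_\pi(X)$ in $\R(X)$, since $\pi^*D_2$ is vertical. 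Dividing by the canonical section $\mathbbm{1}_{\pi^*D_2}$ of the same degree produces a well-defined degree-zero element of the localisation $\R_\pi(X)$ whose image in $\cc(X)$ is $\pi^*f$ itself.

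For the reverse inclusion I would start from $g \in \R_\pi(X)_0$ and write it as $g=s/t$ with $s,t \in \R(X)_w$ of the same degree $w \in \Cl_\pi(X)$. Since $w$ lies in the subgroup generated by classes of vertical divisors, I can choose a vertical representative $D_w$, realising $s$ and $t$ as rational functions in $H^0(X,\Osh_X(D_w))$. Restricting to the generic fiber $X_\eta$, the fact that $D_w$ is vertical forces $D_w|_{X_\eta}=0$, so that both $s|_{X_\eta}$ and $t|_{X_\eta}$ lie in $H^0(X_\eta,\Osh_{X_\eta})$. Under the regularity hypotheses on $\pi$ implicit in the paper's setup this ring equals $\cc(Y)$, and hence $g=s|_{X_\eta}/t|_{X_\eta}$ belongs to $\cc(Y) \subseteq \cc(X_\eta)=\cc(X)$, which is the desired conclusion.

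The principal obstacle is the reverse inclusion, and more precisely the step $H^0(X_\eta,\Osh_{X_\eta})=\cc(Y)$, i.e., the assertion that a regular function on the generic fiber descends to an element of the base function field. This is exactly where the global assumptions on $\pi$ (such as properness, $\pi_*\Osh_X=\Osh_Y$, and geometric integrality of the generic fiber) have to be invoked. A secondary, purely formal point is to verify that every class $w \in \Cl_\pi(X)$ admits a vertical divisor as a representative; this follows immediately from the definition of $\Cl_\pi(X)$ as the subgroup generated by classes of vertical divisors. Once these two points are granted, what remains is a direct bookkeeping check inside $\cc(X)$.
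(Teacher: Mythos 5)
Your proof is correct and follows essentially the same route as the paper: the easy inclusion is obtained by exhibiting $\pi^*f$ as a degree-zero fraction whose denominator is the canonical section of an effective vertical divisor, and the hard inclusion by restricting numerator and denominator to the generic fiber and using properness together with geometric integrality to get $H^0(X_\eta,\Osh_{X_\eta})=\kk$. The paper only packages the second step slightly differently --- it first shows that $\div(s/t)$ is vertical and principal and then invokes Lemma~\ref{pvert} ($\PDiv_\pi(X)=\pi^*\PDiv(Y)$), which rests on exactly the completeness and $\kb\cap\kk(X_\eta)=\kk$ argument you appeal to.
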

One consequence of the proposition above
is that $\R_\pi(X)$ has a structure of
$\cc(Y)$-algebra. On the other hand, 
since the generic fiber $X_\eta$ 
is defined over a field $\kk$, isomorphic to 
$\cc(Y)$, following~\cite{dp} we can construct 
a Cox ring $\R(X_\eta)$ which has the 
structure of $\cc(Y)$-algebra too.
Our main result is a description 
of the relation existing between these 
two algebras. In particular the latter turns out to 
be a quotient of the former,
and the precise result is the content of
the following.
\begin{introthm}
\label{main:thm}
Let $\pi\colon X\to Y$ be a proper surjective 
morphism of normal varieties 
having only constant invertible global sections, 
such that $\Cl(X)$ is finitely generated, $\Cl_\pi(X)$ torsion free,
and the very general fiber of $\pi$ is irreducible.
Then there exists a Cox ring 
$\R(X_\eta)$ of the generic fiber $X_\eta$ such that
the canonical morphism $\imath\colon X_\eta\to X$ 
induces an isomorphism of $\Cl(X_\eta)$-graded
$\cc(Y)$-algebras
\[
 \R_\pi(X)/\langle 1-u(w)\, :\, w\in\Cl_\pi(X)\rangle
 \to\R(X_\eta),
\]
where $u\colon \Cl_\pi(X)\to \R_\pi(X)^*$
is any homomorphism satisfying 
$u(w) \in \R_\pi(X)_{-w}^*$ for each 
$w$.
\end{introthm}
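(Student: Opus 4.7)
The plan is to construct a canonical $\Cl(X_\eta)$-graded $\cc(Y)$-algebra homomorphism $\Phi\colon\R_\pi(X)\to\R(X_\eta)$ via pullback along $\imath$, to choose $\R(X_\eta)$ so that $\Phi$ annihilates $I:=\langle 1-u(w)\,:\,w\in\Cl_\pi(X)\rangle$, and finally to verify that the induced map $\bar\Phi\colon\R_\pi(X)/I\to\R(X_\eta)$ is a graded bijection. For the construction, each divisor $D$ on $X$ gives the pullback of sections $\Gamma(X,\Osh_X(D))\to\Gamma(X_\eta,\Osh_{X_\eta}(\imath^*D))$, and these assemble into a $(\Cl(X)\to\Cl(X_\eta))$-graded ring homomorphism $\R(X)\to\R(X_\eta)$. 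By Proposition~\ref{main:propo}, every nonzero element of $\R(X)_w$ with $w\in\Cl_\pi(X)$ restricts to a nonzero element of $\cc(Y)\subseteq\R(X_\eta)$, so such elements become invertible in the target and $\Phi$ extends uniquely to $\R_\pi(X)$.

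Next, I would pin down the target $\R(X_\eta)$ using $u$. Since $\Cl_\pi(X)$ is torsion-free and $\Cl(X)$ is finitely generated, the sequence $0\to\Cl_\pi(X)\to\Cl(X)\to\Cl(X_\eta)\to 0$ splits via a section $\sigma\colon\Cl(X_\eta)\to\Cl(X)$. Following~\cite{dp}, the Cox ring $\R(X_\eta)$ is unique up to graded isomorphism, but its presentation depends on a choice of representative divisors on $X_\eta$ and of a $\kk^*$-cocycle controlling the multiplication between graded pieces. Taking the representatives $\imath^*D_{\sigma(\bar w)}$ and adjusting the cocycle by the values $\Phi(u(w))\in\cc(Y)^*$, one obtains a Cox ring in which $\Phi(u(w))=1$ for every $w\in\Cl_\pi(X)$; with this normalization $I\subseteq\ker\Phi$ and the descent $\bar\Phi$ is defined. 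I expect this step to be the main obstacle, as it requires carefully unpacking the Cox ring formalism of~\cite{dp} over the non-closed field $\cc(Y)$; the hypothesis that the very general fiber is irreducible enters here to guarantee that $X_\eta$ is geometrically integral, a prerequisite for that formalism.

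For the bijectivity of $\bar\Phi$ I would argue as follows. The existence of $u$ identifies the subring $R_0:=\bigoplus_{v\in\Cl_\pi(X)}\R_\pi(X)_v$ with the Laurent polynomial ring $\cc(Y)[\Cl_\pi(X)]$ via $\chi^{-v}\mapsto u(v)$, and each $\Cl(X_\eta)$-homogeneous summand $R_{\bar w}:=\bigoplus_{v}\R_\pi(X)_{v+\sigma(\bar w)}$ is free over $R_0$ with basis any $\cc(Y)$-basis of $\R_\pi(X)_{\sigma(\bar w)}$. Since $R_0/(I\cap R_0)\cong\cc(Y)$, tensoring yields $(\R_\pi(X)/I)_{\bar w}\cong\R_\pi(X)_{\sigma(\bar w)}$, and $\bar\Phi_{\bar w}$ becomes the restriction map to $\Gamma(X_\eta,\Osh_{X_\eta}(\imath^*D_{\sigma(\bar w)}))$. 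Injectivity follows from schematic density of $X_\eta$ in $X$ and torsion-freeness of $\Osh_X(D)$; surjectivity uses properness of $\pi$: given $f\in\Gamma(X_\eta,\Osh_{X_\eta}(\imath^*D_{\sigma(\bar w)}))$, viewed as a rational function on $X$ via $\cc(X_\eta)=\cc(X)$, adding a sufficiently large vertical divisor $E$ yields $f\in\Gamma(X,\Osh_X(D_{\sigma(\bar w)}+E))$, and then $f\cdot u([E])\in\R_\pi(X)_{\sigma(\bar w)}$ is mapped by $\Phi$ to $\imath^*(f)\cdot 1=f$.
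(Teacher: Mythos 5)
Your overall strategy --- pull back along $\imath$, invert the vertical-degree sections using properness of $\pi$, and identify the kernel with $\langle 1-u(w)\rangle$ --- is the same as the paper's, and your description of the quotient via the group algebra $\cc(Y)[\Cl_\pi(X)]$ is a reasonable repackaging of the paper's kernel computation. But there are two genuine problems. First, the sequence $0\to\Cl_\pi(X)\to\Cl(X)\to\Cl(X_\eta)\to 0$ need \emph{not} split: torsion-freeness of the kernel is not enough, since $\Cl(X_\eta)$ may have torsion (the surjection $\zz\to\zz/2$ with kernel $2\zz\simeq\zz$ admits no section). So the homomorphism $\sigma$ on which you base both the choice of representatives $\imath^*D_{\sigma(\bar w)}$ and the decomposition $R_{\bar w}=\bigoplus_v\R_\pi(X)_{v+\sigma(\bar w)}$ need not exist. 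This defect is repairable --- a set-theoretic lift of each $\bar w$ suffices, because multiplication by the units $u(v)$ identifies all the pieces $\R_\pi(X)_{w+v}$ and the quotient by $I$ collapses them --- but as written the claim is false.

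Second, and more seriously, the step you yourself flag as ``the main obstacle'' is the heart of the proof and is left undone. To have a map of Cox rings at all (rather than of sheaves of divisorial algebras) you must show that the ideal $\Gamma_\pi(X,\mathcal I)$ defining $\R_\pi(X)$ is carried into the ideal $\Gamma(X_\eta,\mathcal I_\eta)$ defining a Cox ring of $X_\eta$, i.e.\ that the character $\mathcal X\colon K^0\to\cc(X)^*$ extends to some $\mathcal X_\eta\colon K^0_\eta\to\kk(X_\eta)^*$ with $\div\circ\mathcal X_\eta=\id$. This is precisely where the paper uses the hypothesis that $\Cl_\pi(X)$ is torsion free: since $K^0_\eta/\imath^*(K^0)\simeq\Cl_\pi(X)$ (Lemma~\ref{prim1}), the subgroup $\imath^*(K^0)$ is a primitive, hence direct, summand of $K^0_\eta$, so the extension exists; completeness and geometric integrality of $X_\eta$ (via $\kb\cap\kk(X_\eta)=\kk$) then show that the admissible extensions, equivalently the admissible $u$'s, form a single orbit under $\Hom(\Cl_\pi(X),\kk^*)$, which is what justifies the phrase ``any $u$'' in the statement. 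Your proposal instead spends the torsion-freeness hypothesis on the nonexistent splitting and offers only the vague ``adjust the cocycle by $\Phi(u(w))$'' for this delicate point --- a prescription that moreover presupposes $\Phi$ is already defined on $\R_\pi(X)$, which is exactly what is at stake. Until this compatibility is established, $\bar\Phi$ is not defined and the argument does not go through.
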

Let us suppose in addition that the class group of the
geometric generic fiber $X_\eta\times_\kk\kb$ 
is isomorphic to $\Cl(X_\eta)$. 
We will show (see Corollary~\ref{main:cor})
that in this case it is possible to 
combine the theorem above
with the results of~\cite{vi} in order to
recover the Cox ring of a very general
fiber of $\pi$ from the Cox ring of $X$. 
A direct consequence is that finite
generation for the Cox ring of $X$
implies finite generation for 
the Cox ring of the very general fiber.
Applying these results to the blowing-up 
of a toric fiber space along a section, 
we will finally produce new examples of 
varieties with non-finitely generated Cox ring.

The paper is structured as follows.
In Section~\ref{sec-1} we first recall the definition
of Cox sheaf and Cox ring for a variety defined
over a closed field, and then, after remembering 
some facts about varieties defined over a 
(not necessarily closed) perfect field,
following~\cite{dp} we construct a Cox sheaf 
for such varieties.
In Section~\ref{sec:fib} we collect some results
about the generic fiber $X_\eta$ of a proper surjective
morphism $\pi\colon X\to Y$, whose very general fiber is 
irreducible. Section~\ref{sec:div} contains
the proof of Proposition~\ref{main:propo} 
and some lemmas that we are going to use 
in Section~\ref{sec:prf}, where we prove 
Theorem~\ref{main:thm}. In Section~\ref{sec:cor}
we consider the very general fiber and 
in the last section we apply the results above
to the blowing up of toric fiber spaces
along a section.

\subsection*{Acknowledgements}
We would like to thank Prof. Ulrich Derenthal for many useful
comments.

\section{Preliminaries}
\label{sec-1}

In this section we first recall
the definition of Cox sheaf and Cox 
ring in the case of a variety defined
over an algebraically closed field
(see~\cite{ADHL}). 
Then, after recalling some known facts
about algebraic varieties defined 
over a (non necessarily closed) perfect field 
we construct a suitable {\em Cox sheaf
of type} $\lambda$ for such varieties, 
according to~\cite{dp}*{Definition 2.2}.

\subsection{Algebraically closed fields}

\begin{construction}(see~\cite{ADHL})
\label{con:closed}
Let $X$ be a normal variety defined
over a closed field $\cc$, such that
$\cc [X]^* = \cc^*$ and $\Cl(X)$ is finitely generated.
Let $K$ be a finitely generated 
subgroup of $\WDiv(X)$ such that
the class map $\cl\colon K\to \Cl(X)$ 
is onto. The {\em sheaf of divisorial algebras} 
associated to $K$ and its global sections are
\[
\mathcal S = \bigoplus_{D\in K}\Osh_X(D)
\quad
{\rm and}
\quad
\Gamma(X,\mathcal S) 
= \bigoplus_{D\in K} \Gamma(X,\Osh_X(D))
\]
respectively. In the rest of the paper, when
we need to keep trace of the group $K$, 
we will use the notation $\mathcal S_K$
instead of $\mathcal S$.
We will also denote by $\Gamma(X,\mathcal S)_D$
the degree $D$ part of the ring of global sections
of $\mathcal S$. 
Let us consider now the kernel $K^0\subseteq K$ 
of  the class map and  let $\mathcal{X}: K^0\to\cc(X)^*$ 
be a homomorphism of groups 
such that $\div\circ\mathcal{X}=\id$.
Let $\mathcal{I}$ be the ideal sheaf of
$\mathcal{S}$, locally generated by
sections of the form $1-\mathcal{X}(D)$, where
$D\in K^0$. The quotient 
$\mathcal R := \mathcal{S}/\mathcal{I}$ turns out to
be a sheaf (see~\cite{ADHL}*{Lemma~1.4.3.5}) and 
it is called a {\em Cox sheaf} for $X$.
The {\em Cox ring} $\mathcal R(X)$ of $X$
can be defined as the ring of global sections 
of $\mathcal R(X)$, or equivalently 
\begin{equation}
\label{def:cox}
\mathcal R(X) = 
\frac{\Gamma(X,\mathcal S)}
{\Gamma(X,\mathcal I)}.
\end{equation}
\end{construction}

\subsection{Non closed fields}
Let us recall now some facts 
about an algebraic variety $X$, 
defined over a perfect field $\kk$ (see for 
instance~\cite{HS}*{$\S$~A.1 and $\S$~A.2}).
In what follows we will denote by $X_\kb$ the 
base change of $X$ over the algebraic 
closure $\kb$ of $\kk$.
From now on we assume that any variety 
$X$ has only constant invertible global sections, i.e. 
\begin{equation}
\label{assump}
\kb[X]^* = \kb^*, 
\end{equation}
where $\kb[X]$ denotes the ring of global sections 
of the structure sheaf of $X$.
Let us denote by $G = \Gal(\kb/\kk)$ the absolute Galois 
group of $\kk$ and let
\begin{eqnarray*}
\WDiv(X) &  := &
 \left\{D\in\WDiv(X_\kb) : \sigma(D)=
 D\text{ for any }\sigma\in G\right\}
\end{eqnarray*}
be the group of $G$-invariant Weil divisors 
of $X_\kb$. We will denote by $\PDiv(X)$ the 
subgroup of $\WDiv(X)$ consisting of principal
divisors of the form $\div(f)$, with
$f\in\kk(X)$. By~\cite{HS}*{Proposition A.2.2.10 (ii)}
the equality $\PDiv(X)
=\WDiv(X) \cap\PDiv(X_{\bar\kk})$
holds (observe that the hypothesis 
in the cited proposition asks for $X$ to be
projective, but it actually only makes
use of the weaker condition~\eqref{assump}).
Thus, if we denote by $\Cl(X)$ the
quotient group $\WDiv(X)/\PDiv(X)$,
we get inclusions
\begin{equation}
\label{incl}
 \Cl(X)\subseteq \Cl(X_\kb)^G
 \subseteq\Cl(X_\kb).
\end{equation}
Given a divisor $D\in\WDiv(X)$ and 
a Zariski open subset $U$ of $X$, the 
space of sections $\Osh_{X_\kb}(D)(U_\kb)$
is a $\kb$ vector space acted by $G$,
since both $U$ and $X$ are
defined over $\kk$, and thus it is a 
$G$-module. 
Observe that a $G$-invariant element $f\in
\Osh_{X_\kb}(D)(U_\kb)^G$
is a rational function of $X_\kb$,
which is defined over $\kk$ (see for 
instance~\cite{sil}*{Exercise  1.12}).
If we set
\begin{equation}
 \label{rr}
 \Osh_X(D)(U)
 :=
 \Osh_{X_\kb}(D)(U_\kb)^G,
\end{equation}
by~\cite{HS}*{Proposition A.2.2.10 (i)} we have that
the $\kb$ vector space $\Osh_X(D)(U)\otimes_\kk\kb$
is isomorphic to $\Osh_{X_\kb}(D)(U_\kb)$.

We are now able to construct 
a sheaf $\R$ of $\Osh_X$-algebras 
which turns out to be a {\em Cox sheaf
of type $\lambda$} according to
~\cite{dp}*{Definitions 2.2 and 3.3}.

\begin{construction}
\label{con:nonclosed}
Let $X$ be a variety defined over a perfect field $\kk$,
satisfying~\eqref{assump}.
Let us suppose that $\Cl(X)$ is finitely 
generated and let $K$ be a finitely generated 
subgroup of $\WDiv(X)$ whose
image via the class map $\cl\colon K\to \Cl(X_\kb)$ 
is $\Cl(X)$.
Let us consider the $K$-graded sheaf of
$\mathcal \Osh_X$-algebras
\[
 \mathcal{S}
 =
 \bigoplus_{D\in K} \Osh_X(D),
\]
where $\Osh_X(D)$ is the sheaf defined in~\eqref{rr}.
Denote by $K^0\subseteq K$ the kernel of 
the class map and  let $\mathcal{X}: K^0\to\kk(X)^*$ be a 
homomorphism of groups such that
$\div\circ\mathcal{X}=\id$ (such a $\mathcal{X}$ 
exists again by~\cite{HS}*{Proposition A.2.2.10 (ii)}).
Let $\mathcal{I}$ be the ideal sheaf of
$\mathcal{S}$ locally generated by
sections of the form $1-\mathcal{X}(D)$, where
$D\in K^0$. Denote by $\mathcal R$ 
the presheaf $\mathcal{S}/\mathcal{I}$ and by 
$\pi\colon\mathcal S\to \R$ the quotient map.
\end{construction}

\begin{proposition}
The presheaf $\mathcal R$ defined above is 
a Cox sheaf of type $\lambda$ 
(see~\cite{dp}*{Definition 3.3})
where $\lambda\colon \Cl(X)\to\Cl(X_\kb)$
is the inclusion.
\end{proposition}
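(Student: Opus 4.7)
The plan is to verify the conditions of \cite{dp}*{Definition 3.3} by reducing, as far as possible, to the already-established statements for the algebraically closed case of Construction~\ref{con:closed}. Three things need to be checked: that $\mathcal R$ is actually a sheaf (and not merely a presheaf), that it inherits a natural $\Cl(X)$-grading compatible with $\lambda$, and that its base change to $\bar k$ recovers the usual Cox sheaf of $X_{\bar k}$ graded by $\Cl(X_{\bar k})$.

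First I would show that the presheaf $\mathcal{S}/\mathcal{I}$ is a sheaf. The argument parallels \cite{ADHL}*{Lemma 1.4.3.5}: on a sufficiently small affine open $U\subseteq X$ one can trivialize $\mathcal{S}$ as a Laurent polynomial algebra over $\Osh_X(U)$ using the character $\mathcal{X}$, and then $\mathcal{I}$ is identified with the augmentation-type ideal, which is locally free as an $\Osh_X$-module. Taking $G$-invariants poses no problem here because $\mathcal{X}$ is defined to take values in $\kk(X)^*$, so the relations $1-\mathcal{X}(D)$ are already defined over $\kk$; hence local triviality descends to the $\kk$-structure and the sheaf axioms follow as in the closed case.

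Next, the $\Cl(X)$-grading on $\mathcal R$ is induced from the $K$-grading on $\mathcal S$: since the generators $1-\mathcal X(D)$ of $\mathcal I$ have $D\in K^0=\ker(\cl)$, they are trivial in the $\Cl(X)$-grading that factors the $K$-grading through $\cl\colon K\twoheadrightarrow \Cl(X)$. The composition $K\twoheadrightarrow \Cl(X)\xrightarrow{\lambda}\Cl(X_{\bar k})$ then furnishes the compatibility of the grading with $\lambda$ required by \cite{dp}*{Definition 3.3}.

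The substantive part, and the step I expect to be the main obstacle, is the base-change compatibility: one must exhibit a canonical isomorphism $\mathcal R\otimes_\kk \bar\kk\cong \mathcal R_{X_{\bar k}}$, where the right-hand side is the Cox sheaf on $X_{\bar k}$ produced by Construction~\ref{con:closed} from the same group $K$ (now regarded as a subgroup of $\WDiv(X_{\bar k})$) and the same character $\mathcal X$. The key input is~\eqref{rr}, which yields $\Osh_X(D)(U)\otimes_\kk\bar\kk\cong\Osh_{X_{\bar k}}(D)(U_{\bar k})$ for every $D\in K$ and every open $U$; summing over $K$ identifies $\mathcal S\otimes_\kk\bar\kk$ with the sheaf of divisorial algebras on $X_{\bar k}$. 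Since tensoring with $\bar\kk$ is exact and preserves ideals generated by the same relations, $\mathcal I\otimes_\kk\bar\kk$ maps isomorphically onto the analogous ideal on $X_{\bar k}$, and passing to the quotient produces the desired isomorphism. Combined with the observation that the image of $K$ in $\Cl(X_{\bar k})$ is precisely $\lambda(\Cl(X))$, this identification is exactly the type-$\lambda$ condition, completing the verification.
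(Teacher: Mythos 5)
Your argument is correct in substance but takes a genuinely different route from the paper's. You verify \cite{dp}*{Definition 3.3} directly in the \emph{ascent} direction, showing $\phi^*\mathcal R\cong\bar{\mathcal R}$ (the geometric Cox sheaf on $X_\kb$ built from $K$ and $\mathcal X$) using flatness of $\kb/\kk$, the identification $\Osh_X(D)(U)\otimes_\kk\kb\cong\Osh_{X_\kb}(D)(U_\kb)$ from~\eqref{rr}, and the $\kk$-rationality of the generators $1-\mathcal X(D)$ of $\mathcal I$. The paper instead argues by \emph{descent}: it invokes \cite{dp}*{Proposition 3.19}, which already asserts that the invariant push-forward $\phi_*\bar{\mathcal R}^G$ of the geometric Cox sheaf is a Cox sheaf of type $\lambda$, and reduces the proposition to the identification $\mathcal R\cong\phi_*\bar{\mathcal R}^G$; the technical heart there is Lemma~\ref{quot}, which allows one to commute the quotient by $\mathcal I$ with taking $G$-invariants. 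Both routes rest on the same two inputs (equation~\eqref{rr}, via \cite{HS}*{Proposition A.2.2.10}, and the fact that $\mathcal X$ takes values in $\kk(X)^*$); yours trades Lemma~\ref{quot} for exactness of $-\otimes_\kk\kb$, which is cheaper, while the paper's route delegates the verification of all the axioms of Definition 3.3 --- including sheafiness of the quotient presheaf --- to \cite{dp}, whereas you must unwind the definition yourself. Two small points to tighten: your separate sheafiness step is both slightly misstated and redundant --- $\mathcal X$ is defined only on $K^0$, so it trivializes $\mathcal S$ as a Laurent algebra over $\mathcal S/\mathcal I$ rather than over $\Osh_X(U)$, but in any case once $\mathcal R(U)\cong(\bar{\mathcal R}(U_\kb))^G$ is established the sheaf axioms descend from those of $\bar{\mathcal R}$ --- and since the opens $U_\kb$ do not form a basis of $X_\kb$, the isomorphism $\phi^*\mathcal R\cong\bar{\mathcal R}$ should be checked on affine $U$ and then extended using that $\phi$ is affine and all sheaves involved are quasicoherent. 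Neither point affects the correctness of the approach.
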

\begin{proof}
By construction $K/K^0$ is isomorphic
to the group $\Cl(X)$, so that grading
over the former is equivalent to grading
over the latter.
Consider the sheaf of divisorial algebras
\[
 \bar{\mathcal S} = \bigoplus_{D\in K}\Osh_{X_\kb}(D)
\]
together with the $G$-invariant character 
$\mathcal X\colon K^0\to \kk(X)^*\subseteq\kb(X)^*$,
where $G=\Gal(\kb/\kk)$ as before,
and let $\bar{\mathcal I}$ be the ideal sheaf of
$\bar{\mathcal S}$ defined by $\mathcal X$.
Let us denote by $\phi\colon X_\kb\to X$ the 
base change map.
According to the proof of~\cite{dp}*{Proposition 3.19}
the quotient sheaf $\bar{\mathcal R} =
\bar{\mathcal S}/\bar{\mathcal I}$ is a 
$G$-equivariant Cox sheaf of type 
$\lambda$ and the push forward of the 
sheaf of invariants $\phi_*\bar{\mathcal R}^G$ is a 
Cox sheaf of $X$ of type $\lambda$.
Given an open subset $U\subseteq X$
and a divisor $D\in K$ the following holds
\begin{align*}
 (\phi_*\bar{\mathcal R}_{[D]}^G)(U)
 & =
 (\bar{\mathcal R}_{[D]}(U_\kb))^G\\[2pt]
 & =
 ((\bar{\mathcal S}/\bar{\mathcal I})_{[D]}(U_\kb))^G\\[2pt]
 & \simeq
 (\bar{\mathcal S}_D(U_\kb)/\bar{\mathcal I}_D(U_\kb))^G\\[2pt]
 & \simeq
 \mathcal S_D(U)/\mathcal I_D(U),
\end{align*}
where the first isomorphism is by
~\cite{dp}*{Construction 2.7}, 
while the second one is by Lemma~\ref{quot}
and equality~\eqref{rr}.
Therefore $\mathcal R
= \mathcal S/\mathcal I$ is isomorphic to 
$\phi_*\bar{\mathcal R}^G$, and in particular
it is a Cox sheaf of type $\lambda$.
\end{proof}

\begin{remark}
 \label{rem:coxl}
We remark that by~\cite{dp}*{Definition~3.3},
the ring of global sections $\mathcal R(X)
= \Gamma(X,\mathcal S)/\Gamma(X,\mathcal I)$
is a Cox ring of type $\lambda$, and in particular
this implies that a Cox ring of type $\lambda$
for $X_{\kb}$ can be obtained from $\mathcal R(X)$ 
by a base change.
In particular, if $\lambda=\id_{\Cl(X_\kb)}$,
we get the usual Cox ring for $X_{\kb}$.
\end{remark}

\begin{lemma}
\label{quot}
Let $L/k$ be a Galois extension of fields 
with Galois group $G$. Let $V_1\subseteq V_2$ 
be $\kk$ vector spaces and let 
$\overline V_i = V_i\otimes_\kk L$.
Then $\overline V_1$ is $G$-invariant
and the homomorphism
\[
 j\colon V_2/V_1\to (\overline V_2/\overline V_1)^G
 \qquad
 v+V_1\mapsto v+\overline V_1
\]
is an isomorphism.
\end{lemma}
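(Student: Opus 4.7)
The lemma is a purely linear-algebraic statement, and the natural strategy is to reduce everything to the basic Galois-theoretic fact $L^G=k$ by choosing a well-adapted $k$-basis. The $G$-action on each $\overline V_i=V_i\otimes_\kk L$ is the one induced by the Galois action on the second factor; since $V_1\hookrightarrow V_2$ is a $\kk$-linear inclusion, tensoring with $L$ is exact and equivariant, so $\overline V_1$ is automatically a $G$-stable $L$-subspace of $\overline V_2$ and the $G$-action descends to $\overline V_2/\overline V_1$. The map $j$ is then visibly well defined and $\kk$-linear, since $v\in V_2\subseteq \overline V_2$ is fixed by $G$.

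The plan is to fix a $\kk$-basis $\{e_i\}_{i\in I}$ of $V_1$ and extend it to a $\kk$-basis $\{e_i\}_{i\in I}\cup\{f_j\}_{j\in J}$ of $V_2$. Tensoring with $L$, the same families become $L$-bases of $\overline V_1$ and $\overline V_2$ respectively, so $\{\bar f_j\}_{j\in J}$ is an $L$-basis of $\overline V_2/\overline V_1$, and the $G$-action on the quotient is simply coefficient-wise: $\sigma(\sum_j\alpha_j\bar f_j)=\sum_j\sigma(\alpha_j)\bar f_j$, because each $f_j$ lies in $V_2$ and is hence fixed by $G$ in $\overline V_2$. With this normal form everything becomes transparent.

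For injectivity, an element of $\ker j$ is represented by $v=\sum_i a_ie_i+\sum_j b_jf_j\in V_2$ (with $a_i,b_j\in\kk$) that lies in $\overline V_1$; comparing coordinates in the $L$-basis of $\overline V_2$ forces $b_j=0$, so $v\in V_1$. For surjectivity, a $G$-invariant class is of the form $\sum_j\alpha_j\bar f_j$ with $\alpha_j\in L$; invariance under the coefficient-wise action gives $\sigma(\alpha_j)=\alpha_j$ for every $\sigma\in G$, whence $\alpha_j\in L^G=\kk$, and then $\sum_j\alpha_jf_j\in V_2$ lifts the class.

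There is no serious obstacle; the only point that requires mild care is verifying that the $G$-action on $\overline V_2/\overline V_1$ really is the coefficient-wise action in the chosen basis, which boils down to the observation that basis vectors coming from $V_2$ are $G$-fixed in $\overline V_2$. The argument works for arbitrary (possibly infinite) index sets $I,J$ and for arbitrary Galois extensions $L/\kk$, so no finiteness assumption or appeal to Hilbert 90 is needed.
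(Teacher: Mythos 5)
Your proof is correct and follows essentially the same route as the paper's: both complete a $\kk$-basis of $V_1$ to one of $V_2$, use the resulting $G$-equivariant splitting $\overline V_2\simeq\overline V_1\oplus\overline T$, and reduce surjectivity to $L^G=\kk$. Your coordinate-wise verification just makes explicit the step the paper states tersely as ``$t$ is $G$-invariant, that is $t\in V_2$.''
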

\begin{proof}
By hypothesis there is a $G$-equivariant
isomorphism $\overline V_2\simeq 
\overline V_1\oplus \overline T$ of $G$-invariant
vector spaces, where $T$ is obtained by 
completing a basis of $V_1$ to a basis
of $V_2$.
Since $\overline V_1\cap V_2 = \overline V_1^G = V_1$, 
the map $j$ is injective. To prove the surjectivity 
of $j$ let $v+\overline V_1\in (\overline V_2/\overline V_1)^G$,
that is $gv + \overline V_1 = v+\overline V_1$ for any $g\in G$,
or equivalently 
\[
 gv-v\in \overline V_1.
\]
Write $v = v_1 + t$ with
$v_1\in \overline V_1$ and $t\in \overline T$ and observe that
$gv-v\in \overline V_1$ implies $gt-t\in \overline V_1\cap \overline T = 0$, 
so that $t$ is $G$-invariant, that is $t\in V_2$.
Thus
\[
 v + \overline V_1 = t + \overline V_1 = j(t+V_1).
\]
\end{proof}

\section{Divisors on the generic fiber}
\label{sec:fib}
Let $X$ and $Y$ be normal
varieties defined over an algebraically closed 
field $\mathbb K$ of characteristic zero and 
satisfying~\eqref{assump}, and let
$\eta$ be the generic point of $Y$. 
In this section we are going to summarise
some results about the generic fiber 
$X_\eta :=X\times_Y\eta$ of a proper surjective morphism
$\pi\colon X\to Y$, whose very general fiber is irreducible.

First of all observe that the morphism 
$\imath\colon X_\eta\to X$
induces a pullback isomorphism
$\imath^*\colon \cc(X)\to\kk(X_\eta)$,
where $\kk = (\pi\circ\imath)^*(\cc(Y))\simeq \cc(Y)$.
We remark that the complementary
of the smooth locus $X^{\rm sm}$ has codimension 
at least two in $X$ and the same holds for the
generic fiber of the restriction $\pi_{|X^{\rm sm}}$
in $X_\eta$.
Therefore $\imath^*$ induces a surjective 
homomorphism
\begin{equation}
\label{i*div}
\WDiv(X)\to\WDiv(X_\eta).
\end{equation}
In what follows, by abuse of notation, we will use 
the same symbol $\imath^*$ for the above 
homomorphism, and we will denote by $\WDiv_\pi(X)$ 
its kernel.

\begin{proposition}
\label{imath}
The following hold:
\begin{enumerate}
\item the diagram 
\[
 \xymatrix{
  \cc(X)\ar[r]^-{\imath^*}\ar[d]^-{\div}
  & k(X_\eta) \ar[d]^-{\div}\\
  \WDiv(X)\ar[r]^-{\imath^*} 
  & \WDiv(X_\eta)
 }
\]
is commutative;
\item
if $D\in\WDiv(X)$ is effective on an open
subset $U\subseteq X$ then $\imath^*(D)$ 
is effective on the corresponding
open subset $U_\eta\subseteq X_\eta$;
\item
the group $\WDiv_\pi(X)$ is freely generated
by the prime divisors that do not 
dominate $Y$;
\item
the map~\eqref{i*div} induces a surjective homomorphism
$\Cl(X)\to\Cl(X_\eta)$, whose kernel $\Cl_\pi(X)$ is 
generated by the classes of divisors in $\WDiv_\pi(X)$;
\item
for any $D\in \WDiv(X)$
the pullback induces a map $\imath^*\colon\Osh_X(D)
\to\imath_*\Osh_{X_\eta}(\imath^*D)$.
\end{enumerate}
\end{proposition}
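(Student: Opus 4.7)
The plan is to handle the five parts in order, with the bulk of the work going into (iii) and (iv); the remaining items are direct consequences.

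For (i), I would use the fact that $\imath\colon X_\eta\to X$ is flat (it is the limit of the open immersions $\pi^{-1}(V)\hookrightarrow X$ as $V$ ranges over the nonempty opens of $Y$). For flat morphisms of normal varieties, pullback of Weil divisors is compatible with pullback of rational functions, so $\imath^*\div(f)=\div(\imath^*f)$ on the smooth locus; the complement has codimension $\geq 2$ on both sides, which is harmless. For (ii), I would similarly reduce to the prime components of $D$: a prime divisor $P$ either does not dominate $Y$, in which case $\imath^*P=0$, or it dominates $Y$, in which case $\imath^*P$ has support $P\cap X_\eta$ with strictly positive multiplicities; writing $D=\sum a_iD_i$ with $a_i\geq 0$ on $U$ and applying this termwise gives effectivity on $U_\eta$.

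For (iii), the key point is that distinct dominating prime divisors $D_i\neq D_j$ of $X$ have $\imath^*D_i$ and $\imath^*D_j$ with disjoint supports on $X_\eta$. The argument I expect to use is a dimension count: if an irreducible component $Z\subseteq X_\eta$ lay in the support of both pullbacks, then its closure $\overline Z\subseteq X$ would dominate $Y$, its generic fiber over $Y$ would be $Z$ (of codimension $1$ in $X_\eta$), so $\dim \overline Z = \dim Y + \dim X_\eta - 1 = \dim X - 1$. Hence $\overline Z$ is itself a prime divisor of $X$ contained in $D_i\cap D_j$; primality forces $\overline Z=D_i=D_j$, contradicting $D_i\neq D_j$. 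Combined with the fact that non-dominating primes pull back to zero, this shows that in $\WDiv(X_\eta)$, the nonzero images $\imath^*D_i$ (for $D_i$ dominating) have pairwise disjoint supports and positive coefficients, so a relation $\sum a_i D_i\in\ker\imath^*$ forces each coefficient for dominating primes to vanish. The remaining sum lives on non-dominating primes, giving the free generation. I expect this codimension/domination argument to be the main technical obstacle.

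For (iv), surjectivity onto $\Cl(X_\eta)$ follows from surjectivity of \eqref{i*div} combined with (i), which ensures principal divisors go to principal divisors. For the kernel description, given $D$ with $\imath^*D=\div(f')$ for some $f'\in\kk(X_\eta)^*$, I would use the isomorphism $\imath^*\colon\cc(X)\to\kk(X_\eta)$ to lift $f'$ to $f\in\cc(X)^*$; by (i), $D-\div(f)$ pulls back to zero, so it lies in $\WDiv_\pi(X)$, and by (iii) is a $\zz$-combination of non-dominating primes. Hence $[D]$ is represented in $\Cl_\pi(X)$. Finally, (v) is immediate from (i) and (ii): a local section $f\in\Osh_X(D)(U)$ satisfies $\div(f)+D\geq 0$ on $U$, and applying $\imath^*$ gives $\div(\imath^*f)+\imath^*D\geq 0$ on $U_\eta$, so $\imath^*f\in\Osh_{X_\eta}(\imath^*D)(U_\eta)=(\imath_*\Osh_{X_\eta}(\imath^*D))(U)$.
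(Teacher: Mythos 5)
Your proposal is correct and follows essentially the same route as the paper: both reduce everything to determining which prime divisors survive in the generic fiber (exactly those dominating $Y$, i.e.\ locally those primes $\mathfrak p\subseteq A$ with $\mathfrak p\cap B=0$), obtain (i) from compatibility of orders under the localization $A\to S_B^{-1}A$ (your flatness argument is the same fact in different clothing), and deduce (iv) and (v) formally from (i)--(iii). The one point where you diverge is the independence of the surviving primes in (iii), which you establish by a dimension count on closures while the paper reads it off from the fact that distinct primes disjoint from $S_B=B\setminus\{0\}$ localize to distinct primes of $S_B^{-1}A$; your version actually makes explicit a detail the paper leaves implicit, and both are valid.
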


\begin{proof}
Recall that the generic fiber $X_\eta$ is limit
of the family of open subsets $\pi^{-1}(V)\subseteq X$,
where $V$ varies through the open subsets of $Y$.
Let us consider $V = \Spec(B)$, and let 
$U = \Spec(A)$ be an 
affine open subset of $\pi^{-1}(V)$.
The morphism $\pi|_U\colon U\to V$
is induced by an injective homomorphism 
of $\cc$-algebras, $B\to A$. Identifying $B$
with a subalgebra of $A$ we have that
the affine open subset $U_\eta\subseteq X_\eta$,
obtained by base change over $U$,
is the spectrum of the localization 
$S_B^{-1}A$, whose multiplicative
system is $S_B = B\setminus\{0\}$.
The pullback
\[
 \imath^*\colon\Osh_X(U)\to\imath_*\Osh_{X_\eta}(U_\eta)
\]
is thus defined on $U$ by the injection
$A\to S_B^{-1}A$. This shows that
a prime divisor $D$ defined by a prime ideal
$\mathfrak p\subseteq A$ survives in the 
generic fiber if and only if $\mathfrak p\cap B
= 0$, that is $D$ has non-empty intersection 
with $\pi^{-1}(V)$.
This proves (ii) and (iii).
In order to prove (i) recall that the
order of a rational function $f\in\cc(X)$
at $D$ is the length of the 
$\Osh_{X,\mathfrak p}$-module
$\Osh_{X,\mathfrak p}/\langle f\rangle$,
but the local rings $\Osh_{X,\mathfrak p}$
and $\Osh_{X_\eta,\mathfrak p}$
are isomorphic if $\mathfrak p\cap B = 0$.
In order to prove (iv), observe first that the
omomorphism
\[
\Cl(X) \to \Cl(X_\eta),
\qquad
[D] \mapsto [\imath^*(D)]
\]
is well defined by (i). Let us fix a divisor
$D\in\WDiv(X)$ such that $[D]$ is in the kernel 
$\Cl_\pi(X)$ of the above map.
By definition this implies that $\imath^*(D)$ is 
principal on $X_\eta$, so that  we can write 
$\imath^*(D)=\div(g)$, with $g\in\kk(X_\eta)$. 
Since $\imath^*\colon \cc(X)\to k(X_\eta)$
is an isomorphism we have $g=\imath^*(f)$, 
with $f\in\cc(X)$.
We conclude that 
\[
0=\imath^*(D)-\div(g) 
=\imath^*(D-\div (f))
\]
and in particular $D-\div(f)$ is a divisor
of $\WDiv_\pi(X)$, linearly equivalent to $D$.

Finally, to prove (v) let $f\in\Osh_X(D)(U)$ 
so that the divisor $\div(f)+D$ is effective 
on $U$. Then
\[
 \div(\imath^*(f)) + \imath^*(D) = \imath^*(\div(f)+D)
\]
is effective on $U_\eta$ by (ii).
\end{proof}

In what follows we will refer to the elements of $\WDiv_\pi(X)$ 
as {\em vertical divisors} and similarly to the elements
of $\Cl_\pi(X)$ as {\em vertical classes}. 

Let us remark that every pull-back divisor in $\WDiv(X)$
is vertical.
 In the next lemma we are going to prove that 
 in the case of principal divisor, also the converse
 is true.
In order to do that, let us denote by $\PDiv_\pi(X)$ 
the subgroup of $\WDiv_\pi(X)$ consisting
of the principal vertical divisors of $X$.

\begin{lemma}
\label{pvert}
Under the hypotheses above, the equality
$\PDiv_\pi(X) = \pi^*\PDiv(Y)$ holds. 
\end{lemma}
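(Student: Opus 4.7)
The plan is to establish the two inclusions separately. The inclusion $\pi^{*}\PDiv(Y)\subseteq\PDiv_{\pi}(X)$ is direct: for any $g\in\cc(Y)^{*}$, $\div(\pi^{*}g)=\pi^{*}\div(g)$ has support contained in $\pi^{-1}(\supp\div(g))$, which does not dominate $Y$, so the pullback is both principal and vertical.

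For the reverse inclusion, let $f\in\cc(X)^{*}$ with $D=\div(f)$ vertical. By Proposition~\ref{imath}(iii), $\supp(D)\subseteq\pi^{-1}(W)$ for some proper closed $W\subsetneq Y$, so $f$ is regular on $U:=\pi^{-1}(Y\setminus W)$. The key input I would invoke is the identity $\pi_{*}\Osh_{X}=\Osh_{Y}$, which I plan to establish via Stein factorization $\pi=\pi'\circ\tilde\pi$, with $\tilde\pi\colon X\to Z$ proper with connected fibers and $\pi'\colon Z\to Y$ finite: the number of connected components of a general fiber of $\pi$ equals $\deg\pi'$, so the hypothesis that the very general fiber of $\pi$ is irreducible (hence connected) forces $\deg\pi'=1$; since $\pi'$ is finite and birational onto the normal variety $Y$, Zariski's Main Theorem makes it an isomorphism, and $\pi_{*}\Osh_{X}=\Osh_{Y}$ follows. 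Granted this,
\[
 f\in\Gamma(U,\Osh_{X})=\Gamma(Y\setminus W,\pi_{*}\Osh_{X})=\Gamma(Y\setminus W,\Osh_{Y}),
\]
so $f=\pi^{*}g$ for some $g\in\Gamma(Y\setminus W,\Osh_{Y})\subseteq\cc(Y)$; since $f\neq 0$ we have $g\in\cc(Y)^{*}$, and hence $D=\pi^{*}\div(g)\in\pi^{*}\PDiv(Y)$.

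The main obstacle is extracting $\pi_{*}\Osh_{X}=\Osh_{Y}$ from the hypothesis on the very general fiber; this is where the normality of $Y$, the properness of $\pi$, and the connectedness of the very general fiber enter decisively. An equivalent route, closer in spirit to the paper, would descend to the generic fiber via Proposition~\ref{imath}(i): then $\div(\imath^{*}f)=\imath^{*}D=0$ on $X_\eta$, and knowing $\kk[X_\eta]^{*}=\kk^{*}$ (itself essentially the same input, since $\Gamma(X_\eta,\Osh_{X_\eta})$ is the stalk of $\pi_{*}\Osh_{X}$ at $\eta$) forces $\imath^{*}f\in\kk^{*}$, after which the isomorphism $\imath^{*}\colon\cc(X)\to\kk(X_\eta)$ produces the required $g\in\cc(Y)^{*}$ with $f=\pi^{*}g$.
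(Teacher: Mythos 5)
Your proposal is correct, and your primary route is genuinely different from the paper's. The paper stays on the generic fiber: it applies Proposition~\ref{imath}(i) to get $\div(\imath^*f)=\imath^*(\div f)=0$, uses completeness of $X_\eta$ to conclude $\imath^*f\in\kb\cap\kk(X_\eta)$, and then invokes \cite{laz}*{Example 2.1.12} to get $\kb\cap\kk(X_\eta)=\kk$, i.e.\ $f\in\pi^*\cc(Y)$ --- this is exactly the ``equivalent route'' you sketch in your last paragraph. Your main argument instead works entirely upstairs: regularity of $f$ on $\pi^{-1}(Y\setminus W)$ via normality and algebraic Hartogs, then descent through $\pi_*\Osh_X=\Osh_Y$, itself extracted from the irreducibility hypothesis by Stein factorization and Zariski's Main Theorem. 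You correctly identify that the two proofs rest on the same essential input (irreducibility of the general fiber forces $\cc(Y)$ to be algebraically closed in $\cc(X)$, equivalently the finite part of the Stein factorization is trivial); the paper outsources this to the Lazarsfeld citation, while you prove it. What your version buys is self-containedness and independence from the generic-fiber formalism; what the paper's buys is brevity, since Proposition~\ref{imath} and the completeness of $X_\eta$ are already in place and get reused in later arguments (e.g.\ in Lemma~\ref{chi}). Two minor points to tidy if you write this up: check that $Z$ in the Stein factorization is irreducible and normal so that ZMT applies to the finite birational $\pi'$, and note that passing from ``the \emph{very general} fiber is irreducible'' to ``the \emph{general} fiber is connected'' uses density of the very general locus (an implicit assumption the paper shares).
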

\begin{proof}
The inclusion $\pi^*\PDiv(Y)\subseteq
\PDiv_\pi(X)$ is obvious.
In order to prove the opposite
inclusion, let $D\in\PDiv_\pi(X)$
be a principal vertical divisor and let
$f\in\cc(X)$ be a rational function
such that $\div(f) = D$. By Proposition~\ref{imath}
we have
\[
 \div(\imath^*(f)) = \imath^*(D) = 0,
\]
and thus $\imath^*(f)$ must be constant,
being $X_\eta$ complete
by the properness 
hypothesis on $\pi$.
In particular $\imath^*(f)$ is an element
of $\kb\cap \kk(X_\eta)$, where $\kb$
is the algebraic closure of $\kk$.
By~\cite{laz}*{Example 2.1.12}
the following equality
\[
 \kb\cap\kk(X_\eta) = \kk
\]
holds, so that $\imath^*(f)\in\kk$.
In particular $f\in \pi^*(\cc(Y))$
and thus $D = \div(f)$ lies in
$\pi^*\PDiv(Y)$, which proves the
second inclusion.

\end{proof}

\section{Proof of Proposition~\ref{main:propo}}
\label{sec:div}
In this section we are going to prove
Proposition~\ref{main:propo} together 
with some related results. 

Let $X$ be a normal variety defined over 
a closed field $\cc$, such that $\Cl(X)$ is 
finitely generated and $\cc[X]^* = \cc^*$.
Let $K$ and $\mathcal S_K$ be as 
in Construction~\ref{con:closed}.
In what follows, whenever we need to keep 
trace of the degree of an element in 
$\Gamma(X,\mathcal S_K)_D$,
we will use the notation $f t^D$, where 
$f$ is in the Riemann Roch space of $D$.
If we denote by $\Frac_0(\Gamma(X,\mathcal S_K))$
the field of fractions of homogeneous sections 
having the same degree, we have the following.
\begin{lemma}
\label{iso_frac}
The map
$\mu_K\colon \Frac_0(\Gamma(X,\mathcal S_K)) 
\to \cc(X)$, defined by
$ft^D/gt^D \mapsto f/g$
is an isomorphism.
\end{lemma}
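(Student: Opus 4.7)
The plan is to verify that $\mu_K$ is a well-defined ring homomorphism, and then to establish injectivity and surjectivity separately. Well-definedness and compatibility with the ring operations are purely formal consequences of the cross-multiplication identity inside the graded domain $\Gamma(X,\mathcal S_K)$: if $ft^D/gt^D = f't^{D'}/g't^{D'}$ in $\Frac_0(\Gamma(X,\mathcal S_K))$, then $fg't^{D+D'}=f'gt^{D+D'}$, so $fg'=f'g$ as elements of $\cc(X)$, and hence $f/g=f'/g'$. Injectivity is immediate: $f/g=0$ in $\cc(X)$ forces $f=0$, and then $ft^D=0$ as a section of $\Osh_X(D)$, so the fraction itself is zero.

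The real content of the lemma is surjectivity. Given a nonzero $\varphi\in\cc(X)$, I would consider its pole divisor $F:=-\min(\div(\varphi),0)$, which is effective and by construction satisfies $\div(\varphi)+F\geq 0$. Since the class map $\cl\colon K\to\Cl(X)$ is surjective by hypothesis, I pick $D\in K$ with $[D]=[F]$ and write $D=F+\div(v)$ for some $v\in\cc(X)^*$. The two computations
\[
\div(v^{-1})+D \;=\; F\;\geq\; 0,\qquad \div(\varphi v^{-1})+D \;=\; \div(\varphi)+F\;\geq\; 0
\]
then show that both $v^{-1}t^D$ and $\varphi v^{-1}t^D$ lie in $\Gamma(X,\mathcal S_K)_D$, and evidently
\[
\mu_K\!\left(\frac{\varphi v^{-1}t^D}{v^{-1}t^D}\right)=\varphi.
\]

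The main (mild) obstacle is that $K$ is only a finitely generated subgroup of $\WDiv(X)$, so there is no reason the effective divisor $F$ itself should lie in $K$; one cannot simply take $D=F$. The fix is exactly the use of the surjectivity assumption on $\cl\colon K\to\Cl(X)$: rather than $F$ itself, we take any $D\in K$ linearly equivalent to $F$, and absorb the auxiliary function $v$ implementing the equivalence simultaneously into numerator and denominator, so that it cancels in the quotient and the image under $\mu_K$ is exactly $\varphi$.
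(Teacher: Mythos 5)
Your argument is correct and essentially identical to the paper's: the paper also reduces to surjectivity (injectivity being automatic for a homomorphism of fields), decomposes $\div(h)$ into its effective and anti-effective parts, uses surjectivity of $\cl\colon K\to\Cl(X)$ to replace the pole divisor by a linearly equivalent $D\in K$ with a section $g\in\Gamma(X,\mathcal S_K)_D$, and exhibits $h$ as the image of $hgt^D/gt^D$. Your $v^{-1}$ is exactly the paper's $g$, so the two proofs coincide up to notation.
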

\begin{proof}
Since $\mu_K$ is a homomorphism of fields,  
we only need to show that it is surjective.
To this purpose, let us fix $h\in \cc(X)$. We can write
$\div(h) = A - B$, with $A$ and $B$ effective 
divisors in $\WDiv(X)$ without common support. 
Since the class map $\cl \colon K\to\Cl(X)$
is surjective, there exist
$D\in K$ and $g\in \Gamma(X,\mathcal S_K)_D$
such that $\div (g) + D = B$. The fraction
$hgt^D/gt^D$ is then an element in 
$\Frac_0(\Gamma(X,\mathcal S_K))$ whose
image via $\mu_K$ is $h$. 
\end{proof}
Let us suppose now that $X$ and $Y$ are
normal varieties having only constant 
invertible global sections and let 
$\pi\colon X\to Y$ be a proper surjective morphism 
whose very general fiber is irreducible.
Let $K$ be a finitely generated subgroup of
$\WDiv(X)$ such that the class map 
$K\to \Cl(X)$ is surjective.
In order to prove Proposition~\ref{main:propo} 
we are going to define the localization 
\[
\Gamma_\pi(X,\mathcal S_K)
=
S_\pi^{-1}\Gamma(X,\mathcal S_K),
\]
where $S_\pi$ is the multiplicative system consisting 
of the non-zero homogeneous elements 
whose degree $D\in K$ is such that 
$[D]\in\Cl_\pi(X)$. 

\begin{proof}[Proof of Proposition~\ref{main:propo}]
Let us denote by 
$\eta_K \colon\cc(Y)\to \Frac_0(\Gamma(X,\mathcal S_K))$
the composition $\mu_K^{-1}\circ \pi^*$.
We claim that it suffices to prove that
\[
 {\rm im}(\eta_K) = \Gamma_\pi(X,\mathcal S_K)_0.
\]
Indeed, by~\eqref{def:cox}, $\Gamma_\pi(X,\mathcal S_K)_0$
surjects onto $\R_\pi(X)_0$ and moreover the 
composition $\cc(Y)\to \R_\pi(X)_0$
is injective because the domain is a field.

In order to prove the inclusion ${\rm im}(\eta_K)\subseteq
\Gamma_\pi(X,\mathcal S_K)_0$,
let $s\in\cc(Y)$ and let $h := \pi^*(s)\in \cc(X)$. 
Write $\div(h) = A - B$, 
with $A$ and $B$ vertical, effective, 
with no common support.
Arguing as we did in the proof of Lemma~\ref{iso_frac}
we have that
\[
 \eta_K(s) 
 = \mu_K^{-1}(h)
 = hgt^D/gt^D
\]
where $D\in K$ is linearly equivalent to the 
vertical divisor $B$, so that the above quotient
is in $\Gamma_\pi(X,\mathcal S_K)_0$.

Viceversa let $ft^D/gt^D\in\Gamma_\pi(X,\mathcal S_K)_0$
be a degree zero homogeneous fraction.
The divisor $\imath^*(\div(f)+D)$ 
is effective by Proposition~\ref{imath}
and it is principal, being $[D]\in\Cl_\pi(X)$. 
Since $X_\eta$ is complete, 
$\imath^*(\div(f)+D) = 0$ or equivalently
$\div(f)+D$ is vertical. In the same way
one shows that also $\div(g)+D$ is vertical,
so that their difference $\div(f/g)$ is vertical
and principal. By Lemma~\ref{pvert}
the latter divisor is a pullback so that
$f/g = \pi^*(s)$ for some $s\in\cc(Y)$,
which proves the claim.
\end{proof}

Given $K$ as before and the map 
$\imath^*\colon \WDiv(X) \to \WDiv(X_\eta)$,
from now on for simplicity of notation
we will set $K_\eta :=  \imath^* (K)
\subseteq \WDiv(X_\eta)$.
By Proposition~\ref{imath} (v)
we have a morphism of sheaves of divisorial algebras
$\imath^*\colon\mathcal S_K
 \to\imath_*\mathcal S_{K_\eta}$ and
passing to global sections we obtain a 
homomorphism of rings
\begin{equation}
 \label{divmap}
 \imath^*\colon\Gamma(X,\mathcal S_K)
 \to\Gamma(X_\eta,\mathcal S_{K_\eta}).
\end{equation}

\begin{remark}
\label{inj}
If the subgroup $K$ does not contain vertical 
divisors, that is $K\cap\WDiv_\pi(X) = 0$,
then the restriction of 
$\imath^*\colon \WDiv(X) \to \WDiv(X_\eta)$ 
gives an isomorphism between $K$ and $K_\eta$.
In this case the map $\imath^*$ defined in~\eqref{divmap} 
is an injection since we have seen that $\imath^*$ induces also
an isomorphism between the fields of rational functions.
\end{remark}

\begin{proposition}
\label{global}
If the subgroup $K$ does not contain vertical 
divisors, then the map $\imath^*$ defined in~\eqref{divmap} 
extends to an isomorphism of $\cc(Y)$-algebras
\[
\imath^*\colon \Gamma_\pi(X,\mathcal S_K)\to
\Gamma(X_\eta,\mathcal S_{K_\eta}),
\qquad
\frac{f}{g}\mapsto
\frac{\imath^*(f)}{\imath^*(g)}.
\]
\end{proposition}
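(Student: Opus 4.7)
The plan is to apply the universal property of localization. Since $K$ contains no vertical divisors, the map \eqref{divmap} is already injective by Remark~\ref{inj}, so it suffices to show that $\imath^*$ sends every element of $S_\pi$ to a unit in $\Gamma(X_\eta,\mathcal S_{K_\eta})$ and that the resulting extension is surjective. For the units property, let $g\in\Gamma(X,\Osh_X(D''))$ be a nonzero homogeneous element with $[D'']\in\Cl_\pi(X)$. Then $\imath^*D''$ is principal on $X_\eta$, and since $X_\eta$ is proper with only constant invertible global sections, the space $\Gamma(X_\eta,\Osh_{X_\eta}(\imath^*D''))$ is one-dimensional over $\kk$; hence $\div(\imath^*g)+\imath^*D''=0$, and the reciprocal $(\imath^*g)^{-1}$ lies in $\Gamma(X_\eta,\Osh_{X_\eta}(-\imath^*D''))$, a graded piece of $\Gamma(X_\eta,\mathcal S_{K_\eta})$. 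Injectivity of the extended map is then automatic.

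The heart of the proof is surjectivity. Given a homogeneous $h\in\Gamma(X_\eta,\Osh_{X_\eta}(E))$ with $E\in K_\eta$, the hypothesis $K\cap\WDiv_\pi(X)=0$ yields a unique $D\in K$ with $\imath^*D=E$. Lift $h$ via the function-field isomorphism $\imath^*\colon\cc(X)\to\kk(X_\eta)$ to some $\tilde h\in\cc(X)$. Decompose $\WDiv(X)$ as the direct sum of its horizontal and vertical subgroups (Proposition~\ref{imath}(iii)); since $\imath^*$ identifies the horizontal subgroup with $\WDiv(X_\eta)$ and preserves effectivity, the relation $\imath^*(\div\tilde h+D)=\div h+E\geq 0$ forces a decomposition
\[
\div(\tilde h)+D=H+V^+-V^-
\]
with $H\geq 0$ horizontal and $V^\pm\geq 0$ vertical of disjoint support. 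By surjectivity of $\cl\colon K\to\Cl(X)$ pick $D''\in K$ with $[D'']=[V^-]\in\Cl_\pi(X)$, write $V^-=D''+\div(p)$ for some $p\in\cc(X)^*$, and set $g:=p$ viewed in $\Gamma(X,\Osh_X(D''))$, so that $g\in S_\pi$, together with $\tilde f:=\tilde h\cdot p$. The identity $\div(\tilde f)+D+D''=H+V^+\geq 0$ shows $\tilde f\in\Gamma(X,\Osh_X(D+D''))$, and $\tilde f/g\in\Gamma_\pi(X,\mathcal S_K)_D$ maps under $\imath^*$ to $\imath^*(\tilde h)\,\imath^*(p)/\imath^*(p)=h$.

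Finally the $\cc(Y)$-algebra compatibility follows from $\imath^*\circ\pi^*=(\pi\circ\imath)^*$, which identifies the $\cc(Y)$-actions on both sides. I expect the main obstacle to be the decomposition step together with the clean choice of denominator $g$: one must separate horizontal from vertical contributions in $\div(\tilde h)+D$ and then absorb the negative vertical part $V^-$ into the class group via the surjectivity of $\cl$. After this bookkeeping, the remaining verifications reduce to routine checks on divisors.
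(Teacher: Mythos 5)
Your proof is correct and follows essentially the same route as the paper: injectivity via Remark~\ref{inj}, invertibility of the elements of $S_\pi$ from the fact that an effective principal divisor on the complete fiber $X_\eta$ must vanish, and surjectivity by writing $\div(\tilde h)+D$ as a horizontal (effective) part plus a vertical part and absorbing the negative vertical summand through a linearly equivalent representative in $K$. The only cosmetic difference is that the paper realizes the horizontal part as the Zariski closure in $X$ of the effective divisor $\div(h)+E$ on $X_\eta$, rather than invoking the direct-sum decomposition of $\WDiv(X)$ into horizontal and vertical subgroups.
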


\begin{proof}
By Remark~\ref{inj} we already know that
the map~\eqref{divmap} is injective.
We are now going to prove that 
the image of an element in the multiplicative
system $S_\pi$ is invertible.
Let us fix $g\in S_\pi$, i.e. $g\in\Gamma(X,\mathcal S_K)_D$, 
and $[D]\in\Cl_\pi(X)$. Then $\imath^*(g)\in
\Gamma(X_\eta,\mathcal S_{K_\eta})_{\imath^*(D)}$,
where $\imath^*(D)$ is a principal divisor,
being its class trivial. Thus 
\[
 \imath^*(\div(g) + D)
 =
 \div(\imath^*(g)) + \imath^*(D) 
 = 
 0,
\]
where the first equality is by Lemma~\ref{imath}
and the second is due to the fact that
the generic fiber $X_\eta$ is complete,
being $\pi$ proper by hypothesis.
In particular $\imath^*(g)$ is invertible with
inverse $\imath^*(g^{-1})\in
\Gamma(X_\eta,\mathcal S_{K_\eta})_{\imath^*(-D)}$.
This shows that the map defined in the statement
is an injective homomorphism of $\cc(Y)$-algebras.

In order to prove the surjectivity,
it suffices to show that any homogeneous
$s\in\Gamma(X_\eta,\mathcal S_{K_\eta})_{\imath^*(D)}$,
with $D\in K$, is in the image.
At the level of rational functions we have 
$s=\imath^*(f)$, where $f\in \cc(X)$, with
\[
 \imath^*(\div(f) + D)
 =
 \div(\imath^*(f)) + \imath^*(D) 
 =
 E_\eta,
 \quad
 \text{effective on $X_\eta$}.
\]
If we denote by $E$ the Zariski closure of $E_\eta$ in $X$,
we have that $E$ is effective
too and $\imath^*(E) = E_\eta$.
Then the above formula implies
that $\div(f)+D = E + V$, where $V\in\WDiv_\pi(X)$. 
Write $V = A - B$, with
$A$ and $B$ effective. Let $B'\in K$ linearly
equivalent to $B$ and let
$h\in \Gamma(X,\mathcal S_K)_{B'}$ be such that
$\div(h)+B'=B$.
By the equality
\[
 \div(fh) + D + B'
 =
 E + A
\]
we deduce that $fh\in\Gamma(X,\mathcal S_K)_{D+B'}$
and thus  $\frac{fh}h\in\Gamma_\pi(X,\mathcal S_K)$
is a preimage of $s$.
\end{proof}

\section{Proof of Theorem~\ref{main:thm}}
\label{sec:prf}

In this section we are going to give
the proof of Theorem~\ref{main:thm}. 
In order to do that we first state and prove 
a couple of lemmas. Throughout the section
$\pi\colon X\to Y$ will be a proper surjection 
of normal varieties whose very general fiber 
is irreducible. From now on we also suppose 
that the subgroup $K\subseteq\WDiv(X)$ does
not contain vertical divisors, so that,
by Lemma~\ref{inj}, it is isomorphic to $K_\eta$. 
\begin{lemma}
\label{prim1}
If we denote by $K^0_\eta$ the kernel of the
surjection $K_\eta\to\Cl(X_\eta)$, we
have an isomorphism between 
$K^0_\eta/\imath^*(K^0)$ and $\Cl_\pi(X)$.
\end{lemma}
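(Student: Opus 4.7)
The plan is to extract the required isomorphism from the snake lemma applied to a natural ladder of short exact sequences. Specifically, I would consider the commutative diagram
\begin{equation*}
\xymatrix{
0 \ar[r] & K^0 \ar[r] \ar[d]^-{\imath^*} & K \ar[r] \ar[d]^-{\imath^*} & \Cl(X) \ar[r] \ar[d]^-{\imath^*} & 0 \\
0 \ar[r] & K^0_\eta \ar[r] & K_\eta \ar[r] & \Cl(X_\eta) \ar[r] & 0
}
\end{equation*}
whose top row is exact by the very definition of $K^0$, and whose bottom row is exact because $K_\eta = \imath^*(K)$ surjects onto $\Cl(X_\eta)$ via the composition with $\Cl(X)\to\Cl(X_\eta)$, the latter map being surjective by Proposition~\ref{imath}(iv).

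First I would verify that the diagram is well-posed. The inclusion $\imath^*(K^0) \subseteq K^0_\eta$ and the commutativity of the right-hand square both follow immediately from Proposition~\ref{imath}(i), which says that $\imath^*$ commutes with $\div$; in particular $\imath^*$ carries principal divisors to principal divisors. Moreover, Proposition~\ref{imath}(iv) identifies the kernel of the rightmost vertical arrow with $\Cl_\pi(X)$, which is the group that is supposed to appear on the other side of the desired isomorphism.

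Next I would invoke the snake lemma. Under the standing hypothesis $K \cap \WDiv_\pi(X) = 0$, Remark~\ref{inj} guarantees that the middle vertical arrow is an isomorphism, so that both its kernel and its cokernel vanish. The rightmost vertical arrow is surjective, so its cokernel vanishes too. The snake exact sequence therefore collapses to
\begin{equation*}
0 \to \Cl_\pi(X) \to K^0_\eta/\imath^*(K^0) \to 0,
\end{equation*}
which is exactly the claimed isomorphism $\Cl_\pi(X) \simeq K^0_\eta/\imath^*(K^0)$.

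I do not expect any real obstacle: once the diagram is in place, the conclusion is automatic from the snake lemma. If one prefers a hands-on argument, one may instead define a map $K^0_\eta \to \Cl_\pi(X)$ by sending $\imath^*(D)\in K^0_\eta$ (with the unique $D\in K$, well-defined by injectivity of $\imath^*|_K$) to the class $[D]\in\Cl(X)$, and then check directly that this map lands in $\Cl_\pi(X)$, is surjective by the surjectivity of $K\to\Cl(X)$, and has kernel precisely $\imath^*(K^0)$.
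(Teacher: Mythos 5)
Your proof is correct and follows essentially the same route as the paper: the paper's proof consists precisely of a diagram chase in the same ladder of short exact sequences (with the columns $0\to K\to K_\eta\to 0$ and $0\to\Cl_\pi(X)\to\Cl(X)\to\Cl(X_\eta)$ adjoined), which is exactly what your snake-lemma argument formalizes. Your justifications of the well-posedness of the diagram via Proposition~\ref{imath} and of the middle isomorphism via Remark~\ref{inj} match the standing hypotheses of the section.
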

\begin{proof}
A diagram chasing in the following commutative 
diagram with exact rows and columns
establishes the claimed isomorphism
\[
\xymatrix{
&  &  & 0\ar[d] & \\
 & 0\ar[d] & 0\ar[d] & \Cl_\pi(X)\ar[d] & \\
0\ar[r] & K^0 \ar[r]\ar[d] & K\ar[d]\ar[r] &\Cl(X)\ar[d]\ar[r] & 0\\
0\ar[r] & K^0_\eta \ar[r]\ar[d] & K_\eta\ar[r]\ar[d] & \Cl(X_\eta)\ar[d]\ar[r] & 0.\\
& K^0_\eta/\imath^*(K^0)\ar[d] & 0 & 0 &\\
& 0
}
\]
\end{proof}

From now on we assume the group
${\rm Cl}_\pi(X)$ to be torsion-free.
Let us consider 
the following set of characters (which are sections
of the map $\div\colon \cc(X)\to\WDiv(X)$)
\[
 {\rm SecDiv}(K^0,\mathbb K(X)^*) 
 :=
 \{\mathcal X\colon K^0\to \cc(X)^*
 \mid \div\circ\mathcal X = \id\},
\]
and define ${\rm SecDiv}(K^0_\eta,
\kk(X_\eta)^*)$ in a similar way.
Let us define the map
\[
 \Phi
 \colon
 {\rm SecDiv}(K^0_\eta,\kk(X_\eta)^*)
 \to
 {\rm SecDiv}(K^0,\mathbb K(X)^*),
 \qquad
 \mathcal X_\eta\mapsto 
 {\imath^*}^{-1}\circ\mathcal X_\eta\circ \imath^*_{|K_0},
\]
and observe that the group $\Hom(\Cl_\pi(X),\kk^*)$
acts on ${\rm SecDiv}(K^0_\eta,\kk(X_\eta)^*)$
by multiplication.
\begin{lemma}
\label{chi}
The following hold:
\begin{enumerate}
\item
$\Phi$ is the quotient map by the action
of $\Hom(\Cl_\pi(X),\kk^*)$;
\item the pulback
$\imath^*\colon 
\mathcal S_K\to \mathcal S_{K_\eta}$
maps the ideal sheaf $\mathcal I$ to 
$\mathcal I_\eta$.
\end{enumerate}
\end{lemma}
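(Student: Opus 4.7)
The plan is to deduce part (i) from the exact sequence in Lemma~\ref{prim1}, and then to obtain part (ii) as an immediate consequence of part (i) once compatible characters are fixed. For (i) the strategy breaks into three steps: well-definedness of $\Phi$, verification that the $\Hom(\Cl_\pi(X),\kk^*)$-action preserves its fibres, and identification of those fibres with orbits (including surjectivity).

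First I would check that $\Phi$ lands in $\mathrm{SecDiv}(K^0,\mathbb K(X)^*)$: for $D\in K^0$, commutativity of $\div$ with $\imath^*$ from Proposition~\ref{imath}(i), combined with the injectivity of $\imath^*$ on $K$ guaranteed by Remark~\ref{inj} (since $K$ contains no vertical divisors), gives $\div(\Phi(\mathcal{X}_\eta)(D)) = D$. Next I would note that any $\chi\in\Hom(\Cl_\pi(X),\kk^*)$ pulls back through the isomorphism of Lemma~\ref{prim1} to a homomorphism $\tilde\chi\colon K^0_\eta\to\kk^*$ that is trivial on $\imath^*(K^0)$; hence $\Phi(\tilde\chi\cdot\mathcal{X}_\eta)=\Phi(\mathcal{X}_\eta)$, so $\Phi$ is constant on orbits.

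For surjectivity, given $\mathcal{X}\in\mathrm{SecDiv}(K^0,\mathbb K(X)^*)$ I would first define $\mathcal{X}_\eta$ on $\imath^*(K^0)\subseteq K^0_\eta$ by $\mathcal{X}_\eta(\imath^*D):=\imath^*(\mathcal{X}(D))$, which is unambiguous by injectivity of $\imath^*$ on $K^0$. The torsion-free hypothesis, applied to the finitely generated group $\Cl_\pi(X)$, makes it free of finite rank, so by Lemma~\ref{prim1} the sequence $0\to\imath^*(K^0)\to K^0_\eta\to\Cl_\pi(X)\to 0$ splits; for each basis element of a complementary free summand I would select a rational function on $X_\eta$ with the prescribed principal divisor and extend $\mathcal{X}_\eta$ multiplicatively, obtaining $\Phi(\mathcal{X}_\eta)=\mathcal{X}$. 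For the converse fibre-orbit statement, if $\Phi(\mathcal{X}_\eta)=\Phi(\mathcal{X}'_\eta)$ then $\mathcal{X}_\eta/\mathcal{X}'_\eta\colon K^0_\eta\to\kk(X_\eta)^*$ has trivial divisor and is trivial on $\imath^*(K^0)$; since $X_\eta$ is complete and integral, a unit of trivial divisor lies in $\kk^*$, so this quotient factors through $K^0_\eta/\imath^*(K^0)\simeq \Cl_\pi(X)$ and provides the required element of $\Hom(\Cl_\pi(X),\kk^*)$.

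Part (ii) then follows by direct computation: once one picks $\mathcal{X}_\eta$ and sets $\mathcal{X}:=\Phi(\mathcal{X}_\eta)$, a local generator $1-\mathcal{X}(D)$ of $\mathcal{I}$ with $D\in K^0$ maps under $\imath^*$ to $1-\imath^*(\mathcal{X}(D))=1-\mathcal{X}_\eta(\imath^*D)$, which is a local generator of $\mathcal{I}_\eta$ since $\imath^*D\in K^0_\eta$. I expect the main obstacle to be the surjectivity step in (i), where torsion-freeness of $\Cl_\pi(X)$ is genuinely used both to split the exact sequence of Lemma~\ref{prim1} and to freely prescribe the values of $\mathcal{X}_\eta$ on lifts of a basis; without it one could only obtain sections up to roots of unity and would have to separately handle torsion characters.
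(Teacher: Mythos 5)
Your proof is correct and follows essentially the same route as the paper: surjectivity of $\Phi$ by extending the character from $\imath^*(K^0)$ to $K^0_\eta$ using torsion-freeness of $\Cl_\pi(X)$ (the paper phrases this as primitivity of $\imath^*(K^0)$ in $K^0_\eta$ via Lemma~\ref{prim1}, you as splitting the exact sequence --- the same content), invariance and transitivity on fibres via completeness of $X_\eta$, and (ii) by matching local generators through a compatible pair $(\mathcal X,\mathcal X_\eta)$. The one place you are slightly less careful than the paper is the assertion that a global unit with trivial divisor on the complete variety $X_\eta$ lies in $\kk^*$: since $\kk$ is not algebraically closed this a priori only yields an element of $\kb^*\cap\kk(X_\eta)^*$, and one needs the equality $\kb\cap\kk(X_\eta)=\kk$ (the paper invokes~\cite{laz}*{Example 2.1.12}, which rests on the irreducibility of the very general fibre) to land in $\kk^*$ and hence in $\Hom(\Cl_\pi(X),\kk^*)$.
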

\begin{proof}
We prove (i).
We first claim that $\Phi$ is
surjective. Indeed, let us fix a character 
$\mathcal X\in {\rm SecDiv}(K^0,\mathbb K(X)^*)$.
Observe that since ${\imath_{|K_0}^*}$
is an isomorphism on its image, there exists a 
unique homomorphism of groups 
$\varphi\colon  \imath^*(K^0) \to k(X_\eta)^*$
which makes the following diagram commute
\[
 \xymatrix{
  K^0\ar[r]^-{\mathcal X}\ar[d]^-{\imath^*_{|K_0}}
  & \cc(X)^*\ar[d]^-{\imath^*}\\
  \imath^*(K^0)\ar[r]^-{\varphi} 
  & k(X_\eta)^*.
 }
\]
Therefore for any divisor $D\in K^0$ we have
\[
\div(\varphi(\imath^*(D))) = \div(\imath^*(\mathcal X(D)))
= \imath^*(\div(\mathcal X(D))) = \imath^*(D),
\]
where the second equality follows from 
Proposition~\ref{imath}.
Now, by Lemma~\ref{prim1} and the assumption
that $\Cl_\pi(X)$ is torsion free, we deduce
that the subgroup $\imath^*(K^0)$ is primitive 
in $K^0_\eta$, and in particular $\varphi$ can 
be extended to a homomorphism $\mathcal X_\eta\colon 
K^0_\eta\to k(X_\eta)^*$.
Moreover the extension can be chosen 
in such a way that the 
equality $\div\circ\mathcal X_\eta = \id$ 
holds, so that $\mathcal X_\eta$
is an element of  ${\rm SecDiv}(K^0_\eta,\kk(X_\eta)^*)$.
By construction, $\Phi(\mathcal X_\eta) =\mathcal X$,
which proves the claim.

The map $\Phi$ is invariant for
the group action. Indeed, given 
$\gamma\in \Hom(\Cl_\pi(X),\kk^*)$
and $\mathcal X_\eta\in
{\rm SecDiv}(K^0_\eta,\kk(X_\eta)^*)$
we have $\Phi(\mathcal X_\eta)
= \Phi(\gamma\cdot\mathcal X_\eta)$
because $\gamma([D]) = 1$ for any
$D\in \imath^*(K^0)$.
Since the group action is clearly free,
in order to conclude the proof of (i)
we only need to 
show that the group $\Hom(\Cl_\pi(X),\kk^*)$ 
acts transitively on the fibers of $\Phi$.
Given two elements $\mathcal X_\eta$ 
and $\mathcal X_\eta'$ in the same fiber 
of $\Phi$, the homomorphism
\[
 K^0_\eta\to k(X_\eta)^*,
 \qquad
 D\mapsto\mathcal X_\eta(D)/\mathcal X_\eta'(D)
\]
is trivial on $\imath^*(K^0)$, and
thus by Lemma~\ref{prim1} it descends to a homomorphism
$\gamma\colon\Cl_\pi(X)\to\kk(X_\eta)^*$. Since
\[
 \div(\mathcal X_\eta(D)/\mathcal X_\eta'(D)) 
 =
 \div(\mathcal X_\eta(D)) - \div(\mathcal X_\eta'(D))
 =
 D - D = 0
\]
and $X_\eta$ is complete, we deduce that 
$\mathcal X_\eta(D)/\mathcal X_\eta'(D)
\in \kb^*\cap\kk(X_\eta)^*$. 
Moreover, by~\cite{laz}*{Example 2.1.12}, 
we have that $\kb^*\cap\kk(X_\eta)^*= k^*$, 
so that $\gamma\in {\rm Hom}(\Cl_\pi(X),\kk^*)$. 

We now prove (ii). Given a character 
$\mathcal X\in {\rm SecDiv}(K^0,\mathbb K(X)^*)$,
by Construction~\ref{con:closed}
we know that $\mathcal I$
is locally generated by the sections $1-\mathcal X(D)$,
for $D\in K^0$. By the surjectivity of $\Phi$
there exists a character 
$\mathcal X_\eta \in {\rm SecDiv}(K_\eta^0,\kk(X_\eta)^*)$
that makes the following diagram commute:
\[
 \xymatrix{
  K^0\ar[r]^-{\mathcal X}\ar[d]^-{\imath^*_{|K_0}}
  & \cc(X)^*\ar[d]^-{\imath^*}\\
  K^0_\eta\ar[r]^-{\mathcal X_\eta} 
  & k(X_\eta)^*.
 }
\]
Since $\imath^*(K_0) \subseteq K^0_\eta$ and,
by Construction~\ref{con:nonclosed}, 
$\mathcal I_\eta$ is locally generated by 
the sections $1-\mathcal X_\eta(D')$, 
for $D'\in K_\eta^0$, we get the statement.
\end{proof}

\begin{proof}[Proof of Theorem~\ref{main:thm}]
By Proposition~\ref{global}, Lemma~\ref{chi},
the characterization of Cox rings 
in~\cite{ADHL}*{Lemma 1.4.3.5}
and~\cite{dp}*{Construction 2.7},
we have a commutative diagram
with exact rows
\[
 \xymatrix{
  0\ar[r] &
  \Gamma_\pi(X,\mathcal I)\ar[r]\ar[d] &
  \Gamma_\pi(X,\mathcal S_K)\ar[r]\ar[d]^-\simeq_-{\imath^*} &
  \R_\pi(X)\ar[r]\ar[d]^-{\imath_R} &
  0\\
  0\ar[r] &
  \Gamma(X_\eta,\mathcal I_\eta)\ar[r] &
  \Gamma(X_\eta,\mathcal S_{K_\eta})\ar[r] &
  \R(X_\eta)\ar[r] &
  0\\
 }
\]
where $\Gamma_\pi(X,\mathcal I)$
is the localization of the ideal
$\Gamma(X,\mathcal I)$.
In particular the morphism
$\imath_R\colon\R_\pi(X) \to \R(X_\eta)$
is surjective, while the restriction of 
$\imath^*$ to $\Gamma_\pi(X,\mathcal I)$
is injective. 

Define the homomorphism of groups
\begin{equation}
 \label{eq:u}
 u\colon \Cl_\pi(X)\to\R_\pi(X)^*,
 \qquad
 [D]\to \mathcal {\imath^*}^{-1}(\mathcal X_\eta(D))
 +\Gamma_\pi(X,\mathcal I),
\end{equation}
where $D\in K^0_\eta$ and 
$\mathcal X_\eta(D)\in 
\Gamma(X_\eta,\mathcal S_{K_\eta})_{-D}$.
Let us show that $u$ is well
defined. Indeed, if $D'\in K^0_\eta$ is another
representative of the same class $[D]$,
by Lemma~\ref{chi} we have  
$D'-D = \imath^*(L)$ with $L\in K^0$,
so that
\begin{align*}
 {\imath^*}^{-1}(\mathcal X_\eta(D+i^*(L)))
 & = {\imath^*}^{-1}(\mathcal X_\eta(D)\cdot\mathcal X_\eta(\imath^*(L)))\\
 & = {\imath^*}^{-1}(\mathcal X_\eta(D))\cdot \mathcal X(L)\\
 & \equiv {\imath^*}^{-1}(\mathcal X_\eta(D))\mod{\Gamma_\pi(X,\mathcal I)}.
\end{align*}
Observe that the homomorphism $u$
satisfies the condition $u(w) \in \R_\pi(X)_{-w}^*$ 
for each $w\in \Cl_\pi(X)$. Moreover, given 
another morphism $u'\colon \Cl_\pi(X) \to \R_\pi(X)^*$
satisfying the same condition,
reasoning as in the proof of Lemma~\ref{chi} we
see that $u'/u\in\Hom(\Cl_\pi(X),\kk^*)$,
since $\R_\pi(X)_0^* \simeq\kk^*$.
Thus, by the same Lemma~\ref{chi},
$u'$ can be defined as in~\eqref{eq:u}
by taking the character 
$\mathcal X_\eta' = (u'/u)\cdot\mathcal X_\eta$.
We can now describe the kernel of 
$\imath_R$ as follows
\begin{align*}
 \ker(\imath_R)
 & = \{s+\Gamma_\pi(X,\mathcal I)\, :\, 
 \imath^*(s)\in\Gamma(X_\eta,\mathcal I_\eta)\}\\[2pt]
 & = \{{\imath^*}^{-1}(s_\eta)+\Gamma_\pi(X,\mathcal I)\, :\, 
 s_\eta\in\Gamma(X_\eta,\mathcal I_\eta)\}\\[2pt]
 & = \langle 1-{\imath^*}^{-1}(\mathcal X_\eta(D))
 +\Gamma_\pi(X,\mathcal I)\, :\, D\in K^0_\eta \rangle\\[2pt]
 & = \langle 1- u([D])\, :\, [D]\in \Cl_\pi(X)\rangle,
\end{align*}
which gives the claim.

\end{proof}

\begin{remark}
\label{prim}
Observe that if $\pi\colon X\to Y$ 
admits a rational section $\sigma$
and ${\rm Cl}(Y)$ is torsion free, 
then $\Cl_\pi(X)$ is torsion free
as well.
Indeed we can identify $\Cl_\pi(X)$ with the 
quotient $\WDiv_\pi(X)/\PDiv_\pi(X)$. So
let us take a divisor $V\in \WDiv_\pi(X)$ such that
$nV$ belongs to $\PDiv_\pi(X)$ for some integer
$n >1$.
Since $\PDiv_\pi(X)=\pi^*\PDiv(Y)$ we can write
$nV =  \pi^*D$, with $D\in\PDiv(Y)$ and hence
\begin{align*}
D & =
(\pi\circ\sigma)^*(D) =
\sigma^*(\pi^*D) =
\sigma^*(nV)  =
n\sigma^*(V).
\end{align*}
In particular $n\sigma^*(V)\in\PDiv(Y)$ and 
since we are assuming that $\Cl(Y)$ is
torsion free
we conclude that $\sigma^*(V)
\in\PDiv(Y)$. By applying $\pi^*$ to both
sides of the equation above we deduce
$nV=n\pi^*(\sigma^*(V))$ so that
$V=\pi^*(\sigma^*(V))$ holds since
$\WDiv(X)$ is free abelian.
In particular we conclude that
$V\in\pi^*\PDiv(Y) = \PDiv_\pi(X)$,
which proves the statement.
\end{remark}

\section{Very general fibers}
\label{sec:cor}
In this section we are going to apply the 
results of Theorem~\ref{main:thm}
in order to prove that, under an extra
hypothesis, it is indeed possible to recover 
the Cox ring of a very general fiber of 
$\pi\colon X\to Y$ from the Cox ring of $X$
and the vertical classes
(see Corollary~\ref{main:cor}).
In order to do that we need the following lemma.

\begin{lemma}
\label{lem:iso}
Let $X_i$, with $i\in\{1,2\}$ be a normal 
variety defined over an algebraically closed 
field $\kk_i$ of characteristic $0$. Assume 
that $\Cl(X_i)$ is finitely generated 
and that $\kk_i[X_i]^* = \kk_i^*$,
for any $i\in\{1,2\}$.
If there is an isomorphism of fields 
$\varphi\colon \kk_2\to\kk_1$
and an isomorphism of schemes 
$f\colon X_1\to X_2$ such that the 
following diagram commutes
\[
 \xymatrix{
  X_1\ar[r]^-f\ar[d] & X_2\ar[d]\\
  \Spec(\kk_1)\ar[r]^-{\varphi^*} & \Spec(\kk_2)
 }
\]
then $f$ induces an isomorphism of
graded rings $f^*\colon\R(X_2)\to \R(X_1)$,
such that $f^*|_{\kk_2} = \varphi$.
\end{lemma}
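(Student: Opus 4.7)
The plan is to transport the entire Cox ring construction from $X_2$ to $X_1$ via the isomorphism $f$, while keeping careful track of the $\kk_2$-structure versus $\kk_1$-structure.

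First I would observe that since $f$ is an isomorphism of schemes (not of $\kk$-schemes, but compatible with the base change $\varphi^*$), pullback along $f$ gives a bijection $f^*\colon \WDiv(X_2)\to\WDiv(X_1)$ and, using the commutative square, an isomorphism of rings $f^*\colon \kk_2(X_2)^*\to\kk_1(X_1)^*$ whose restriction to constants is $\varphi$. These intertwine the divisor maps: $\div(f^*h)=f^*\div(h)$. Consequently $f^*$ sends principal divisors to principal divisors and descends to an isomorphism $\Cl(X_2)\to\Cl(X_1)$. Moreover for every $D\in\WDiv(X_2)$ the pullback induces an isomorphism of sheaves $\Osh_{X_2}(D)\simeq f_*\Osh_{X_1}(f^*D)$, hence an isomorphism of global sections compatible with $\varphi$.

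Next I would pick a finitely generated subgroup $K_2\subseteq\WDiv(X_2)$ surjecting onto $\Cl(X_2)$ and set $K_1:=f^*(K_2)$. By the previous step $K_1$ surjects onto $\Cl(X_1)$ and $f^*$ restricts to a group isomorphism $K_2\to K_1$ sending the kernel $K_2^0$ of the class map onto $K_1^0$. Assembling the isomorphisms $\Osh_{X_2}(D)\to f_*\Osh_{X_1}(f^*D)$ over $D\in K_2$ produces an isomorphism of $K_2$-graded sheaves of divisorial algebras $\mathcal S_{K_2}\to f_*\mathcal S_{K_1}$, and hence an isomorphism of global section rings $\Gamma(X_2,\mathcal S_{K_2})\to\Gamma(X_1,\mathcal S_{K_1})$.

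To descend to Cox rings I would fix a character $\mathcal X_2\colon K_2^0\to\kk_2(X_2)^*$ with $\div\circ\mathcal X_2=\id$ and transport it to $\mathcal X_1:=f^*\circ\mathcal X_2\circ (f^*)^{-1}\colon K_1^0\to\kk_1(X_1)^*$. The equality $\div(\mathcal X_1(f^*D))=f^*\div(\mathcal X_2(D))=f^*D$ shows $\mathcal X_1$ is again a section of $\div$. The ideals $\mathcal I_i$ defined by these characters are locally generated by $1-\mathcal X_i(D)$, so the isomorphism $\mathcal S_{K_2}\to f_*\mathcal S_{K_1}$ carries $\mathcal I_2$ onto $f_*\mathcal I_1$. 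Taking the quotient in~\eqref{def:cox} yields the desired graded isomorphism $f^*\colon\R(X_2)\to\R(X_1)$, and by construction its restriction to $\kk_2=\R(X_2)_0\cap\kk_2^*\cup\{0\}$ coincides with $\varphi$.

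The only delicate point is checking that all the arrows really are $\kk_i$-linear via $\varphi$, i.e.\ that pullback of sections behaves as a $\varphi$-semilinear map at every stage; this is essentially forced by the commutativity of the given square relating $f$ to $\varphi^*$, since it identifies the structure sheaves of $X_1$ and $X_2$ up to $\varphi$. Once that bookkeeping is done, the rest is functorial and no obstruction appears because $\R(X_i)$ depends on the choice of $K_i$ and $\mathcal X_i$ only up to canonical isomorphism.
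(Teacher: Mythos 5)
Your proof is correct and follows essentially the same route as the paper's: transport Weil divisors, the class group, the Riemann--Roch spaces, and hence the sheaf of divisorial algebras along $f^*$, then pass to the Cox sheaf and take global sections. The only difference is that you make explicit the transport of the character $\mathcal X$ and the resulting identification of the ideals $\mathcal I_i$, a step the paper leaves implicit in the phrase ``which induces an isomorphism of Cox sheaves.''
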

\begin{proof}
Observe that $f$ induces the pullback
isomorphism on the fields of rational
functions $\kk_2(X_2)\to \kk_1(X_1)$.
Given a prime divisor $D$ of $X_2$,
the restriction $D\cap X^\circ_2$ to
the smooth locus $X^\circ_2$ of $X_2$
is a Cartier non-trivial divisor, because
$X_2\setminus X^\circ_2$ has codimension
at least two by the normality of $X_2$.
Since $f$ is an isomorphism the pullback 
$f^*(D\cap X^\circ_2)$ is contained in the
smooth locus $X^\circ_1$ of $X_1$ and 
it has a unique closure by the 
normality of $X_1$. By linearity
the pullback map extends to an
isomorphism $f^*\colon\WDiv(X_2) \to 
\WDiv(X_1)$ of the groups of Weil divisors,
which maps principal divisors to principal 
divisors and thus gives also an isomorphism of 
divisor class groups $\Cl(X_2)\to\Cl(X_1)$.
By the above discussion, given a Weil
divisor $D$ of $X_2$ and an open subset
$U\subseteq X_2$, the pullback induces
an isomorphism of Riemann-Roch spaces
$\Gamma(U,\Osh_{X_2}(D))\to 
\Gamma(f^{-1}(U),\Osh_{X_1}(f^*D))$.
Thus, given a finitely generated subgroup 
$K\subseteq\WDiv(X_2)$ which surjects
onto $\Cl(X_2)$, the pullback gives an 
isomorphism of sheaves of divisorial algebras 
$\mathcal S_K\to f_*\mathcal S_{f^*K}$,
which induces an isomorphism of Cox sheaves.
By taking global sections we get an isomorphism
of Cox rings $f^*\colon\R(X_2)\to \R(X_1)$,
Finally observe that the last isomorphism
restricted to $k_2$ equals the restriction
of the isomorphism $\kk_2(X_2)\to \kk_1(X_1)$
and thus it coincides with $\varphi$.
\end{proof}
Let us go back to a morphism $\pi\colon X\to Y$
satisfying the hypotheses of Theorem~\ref{main:thm}
and let $X_\eta$ be the generic fiber of $\pi$. 
If we denote by $\bar X_\eta$ the base change 
$X_\eta \times_\kk \kb$, we have the
following.
\begin{corollary}
\label{main:cor}
Let $\pi\colon X\to Y$ satisfy the hypotheses 
of Theorem~\ref{main:thm}
and suppose in addition that the geometric divisor
class group $\Cl(\bar X_\eta)$ is
isomorphic to $\Cl(X_\eta)$. Then
the Cox ring of a very general fiber
of $\pi$ is isomorphic (as a graded ring)
to
\[
 \R_\pi(X)/\langle 1-u(w)\, :\, w\in\Cl_\pi(X)\rangle
 \otimes_\kk\kb
\]
where $u\colon \Cl_\pi(X)\to \R_\pi(X)^*$
is any homomorphism satisfying 
$u(w) \in \R_\pi(X)_{-w}^*$ for each 
$w$.
\end{corollary}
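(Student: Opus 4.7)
The plan is to chain three isomorphisms: the one already given by Theorem~\ref{main:thm}, a base-change step that passes from the generic fiber to the geometric generic fiber, and finally a comparison between the geometric generic fiber and a very general fiber provided by~\cite{vi}.

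First, by Theorem~\ref{main:thm}, the quotient
$\R_\pi(X)/\langle 1-u(w)\, :\, w\in\Cl_\pi(X)\rangle$
is isomorphic, as a $\Cl(X_\eta)$-graded $\cc(Y)$-algebra, to the Cox ring $\R(X_\eta)$ constructed via Construction~\ref{con:nonclosed}. Tensoring with $\kb$ over $\kk = \cc(Y)$ is therefore the same as base-changing $\R(X_\eta)$. By Remark~\ref{rem:coxl}, $\R(X_\eta)\otimes_\kk\kb$ is a Cox ring of type $\lambda$ for $\bar X_\eta = X_\eta\times_\kk\kb$, where $\lambda\colon\Cl(X_\eta)\to\Cl(\bar X_\eta)$ is the inclusion. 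The extra hypothesis $\Cl(\bar X_\eta)\simeq\Cl(X_\eta)$ forces $\lambda$ to be the identity, and hence $\R(X_\eta)\otimes_\kk\kb$ coincides with the standard Cox ring $\R(\bar X_\eta)$.

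Second, I would invoke the results of~\cite{vi} to relate $\bar X_\eta$ to a very general fiber $X_y$. Concretely, for $y$ in a suitable very general subset of $Y$, one obtains a scheme isomorphism $f\colon X_y\to\bar X_\eta$ compatible with some field isomorphism $\varphi\colon\kb\to\cc$, in the sense that the square of Lemma~\ref{lem:iso} commutes (the existence of such $\varphi$ uses only that both $\kb$ and $\cc$ are algebraically closed of characteristic zero and of the same cardinality). Lemma~\ref{lem:iso} then yields an isomorphism of graded rings $f^*\colon\R(\bar X_\eta)\to\R(X_y)$ extending $\varphi$. Composing with the isomorphisms of the previous paragraph produces the desired graded ring isomorphism.

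The main obstacle is step two: one must verify that the hypotheses of Theorem~\ref{main:thm} (plus irreducibility of the very general fiber and the agreement $\Cl(\bar X_\eta)\simeq\Cl(X_\eta)$) put us in the setup where the comparison theorem from~\cite{vi} applies, and then extract from it a scheme isomorphism $X_y\simeq\bar X_\eta$ over a field isomorphism $\kb\simeq\cc$ on a set of $y$'s that is still very general. Once this geometric input is in place, the algebraic bookkeeping — compatibility with gradings, with the character $\mathcal X_\eta$, and with the ideals $\mathcal I_\eta$ — is routine via Lemma~\ref{lem:iso}, since that lemma transports the whole sheaf of divisorial algebras and hence the Cox sheaf.
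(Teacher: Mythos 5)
Your proposal is correct and follows essentially the same route as the paper: Theorem~\ref{main:thm} for the quotient description of $\R(X_\eta)$, Remark~\ref{rem:coxl} to identify $\R(X_\eta)\otimes_\kk\kb$ with $\R(\bar X_\eta)$ using the hypothesis on class groups, and \cite{vi}*{Lemma~2.1} combined with Lemma~\ref{lem:iso} to transport this to a very general fiber. The "obstacle" you flag in step two is exactly what Vial's Lemma~2.1 delivers (a countable intersection of dense opens $W\subseteq Y$ and, for each $b\in W$, a ring isomorphism $\cc\to\kb$ inducing a scheme isomorphism $X_b\to\bar X_\eta$), so no further verification is needed.
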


\begin{proof}
By~\cite{vi}*{Lemma~2.1} there exists a subset
$W\subseteq Y$ which is a countable intersection 
of non empty Zariski open subsets such that 
for each $b\in W$ there is an isomorphism
of rings $\cc\to\kb$ which induces an isomorphism
of schemes $X_b\to \bar X_\eta$.
Therefore by Lemma~\ref{lem:iso} the Cox ring 
of the very general fiber $X_b$ is isomorphic 
to the Cox ring $\R(\bar X_\eta)$ 
of the geometric generic fiber.
The isomorphism between $\Cl(\bar X_\eta)$ 
and $\Cl(X_\eta)$ implies that the former
can be generated by classes of divisors in 
$\WDiv_k(X_\eta)$. 
By Remark~\ref{rem:coxl}, the Cox ring $\R(\bar X_\eta)$ 
is obtained from $\R(X_\eta)$ by a base change, and
hence we can conclude by means of Theorem~\ref{main:thm}.
\end{proof}

\begin{remark}
We remark that the isomorphism 
of the corollary above is not an isomorphism
of graded algebras, since one of them is 
defined over $\cc$ while the other one
over $\kb$.
\end{remark}

\begin{remark}
\label{rem:mori}
Since by Corollary~\ref{main:cor}
the Cox ring of a very general fiber
can be described as a quotient of
a localization of the Cox ring of $X$,
if the latter is finitely generated, then
the former is finitely generated too.
In particular, if we can construct a
morphism $\pi\colon X\to Y$
satisfying the hypotheses of
the above corollary and such that
the Cox ring of a very general 
fiber is not finitely generated,
then we can conclude that 
the Cox ring of $X$ is not finitely 
generated. 
\end{remark}

\section{Blowing-ups of toric fiber spaces}
\label{sec:tor}
In this last section we apply our results
to the blowing-up of a toric fiber space
along a section, with the purpose
of producing new examples of 
varieties with non-finitely generated
Cox ring (see e.g.~\cites{GKK,GKK1,He,LU}).

\begin{construction}
\label{con:tor}
Let $\pi\colon X\to Y$ be a surjective 
toric morphism of normal toric varieties
which has connected fibers 
and such that the induced homomorphism of tori 
$\pi|_{T_X}\colon T_X\to T_Y$ is surjective
and $\Cl(Y)$ is torsion free.
If we denote by $X_0\subseteq X$ the Zariski
closure of the kernel of $\pi|_{T_X}$,
by~\cite{CLS}*{Eq. 3.3.6}
we have that $\pi^{-1}(T_Y) \simeq 
X_0\times T_Y$.
Let $x_0\in X_0$ be a general point and let 
$\phi\colon\tilde X\to X$ be the blowing-up
of $X$ along the Zariski closure of
$\{x_0\}\times T_Y$ via the above isomorphism.
Let $\tilde X_0$ be the preimage of 
$X_0$ via $\phi$, so that the restriction 
$\phi_{|_{\tilde X_0}}\colon\tilde X_0\to X_0$ is the blowing-up 
at $x_0$. We have a commutative diagram
\[
 \xymatrix{
  \tilde X_0\ar[r]\ar[d] & \tilde X\ar[d]^-{\tilde\pi}\\
  1_{T_Y}\ar[r] & Y
 }
\]
where the horizontal arrows are 
inclusions and $\tilde\pi$ denotes the
composition $\pi\circ\phi$.
\end{construction}

\begin{proposition}
\label{notfg}
If the Cox ring of $\tilde X_0$
is not finitely generated then the
same holds for the Cox ring of $\tilde X$.
\end{proposition}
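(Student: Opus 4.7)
The plan is to apply Corollary~\ref{main:cor} together with Remark~\ref{rem:mori} to the morphism $\tilde\pi\colon \tilde X\to Y$. This reduces the proposition to two tasks: identifying the very general fiber of $\tilde\pi$ with $\tilde X_0$, and verifying that $\tilde\pi$ satisfies all the hypotheses of Corollary~\ref{main:cor}. Once both are in place the statement follows: if $\R(\tilde X)$ were finitely generated, so would be the Cox ring of the very general fiber by Remark~\ref{rem:mori}, contradicting the hypothesis.

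For the first task I would exploit the trivialization $\pi^{-1}(T_Y)\simeq X_0\times T_Y$ recalled in Construction~\ref{con:tor}. Under this identification the blow-up center $\overline{\{x_0\}\times T_Y}$ intersects $\pi^{-1}(T_Y)$ precisely in $\{x_0\}\times T_Y$, which is the pullback of $\{x_0\}\subseteq X_0$ under the projection $X_0\times T_Y\to X_0$. Since blowing up commutes with flat base change, one obtains $\tilde\pi^{-1}(T_Y)\simeq \tilde X_0\times T_Y$. Hence every fiber of $\tilde\pi$ over a closed point of the open torus $T_Y\subseteq Y$ is isomorphic to $\tilde X_0$, and in particular so is the very general fiber.

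For the second task I would verify the hypotheses one at a time. Normality of $\tilde X$ follows from its being the blow-up of a normal variety along an irreducible smooth center; the condition that only the constants are invertible global sections is inherited from $X$; and $\Cl(\tilde X)$ is finitely generated, being generated by $\phi^*\Cl(X)$ together with the class of the exceptional divisor. Properness and surjectivity of $\tilde\pi$ are inherited from the toric morphism $\pi$ (which one may assume relatively proper, as is implicit in the construction). To show that $\Cl_{\tilde\pi}(\tilde X)$ is torsion-free, I would appeal to Remark~\ref{prim}: any point $x_1\in X_0\setminus\{x_0\}$ yields a section $t\mapsto (x_1,t)$ of $\pi|_{\pi^{-1}(T_Y)}$ which, by avoiding the blow-up center, lifts to a rational section $Y\dashrightarrow \tilde X$ of $\tilde\pi$, while $\Cl(Y)$ is torsion-free by hypothesis. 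Finally, since $\tilde\pi^{-1}(T_Y)\simeq \tilde X_0\times T_Y$ is a trivial family over $T_Y$, the generic fiber is the base change $\tilde X_0\times_\cc\cc(Y)$ and the geometric generic fiber is $\tilde X_0\times_\cc\kb$; in both cases the class group is identified with $\Cl(\tilde X_0)$, so that the hypothesis $\Cl(\bar{\tilde X}_\eta)\simeq\Cl(\tilde X_\eta)$ of Corollary~\ref{main:cor} is satisfied.

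The main obstacle will be making the compatibility of the blow-up with the trivialization on $\pi^{-1}(T_Y)$ fully rigorous, since it is this compatibility that yields simultaneously the identification of the very general fiber with $\tilde X_0$ and the identification of the (geometric) generic class group with $\Cl(\tilde X_0)$. The remaining hypotheses of Corollary~\ref{main:cor} are essentially structural consequences of the toric setting and of the specific shape of the center of the blow-up.
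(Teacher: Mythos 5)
Your overall strategy coincides with the paper's: reduce to Remark~\ref{rem:mori} and Corollary~\ref{main:cor}, identify the very general fiber with $\tilde X_0$ via the trivialization $\tilde\pi^{-1}(T_Y)\simeq\tilde X_0\times T_Y$, and check the hypotheses one by one. Your treatment of the torsion-freeness of $\Cl_{\tilde\pi}(\tilde X)$ is a genuine (and clean) variant: you lift the constant section $t\mapsto(x_1,t)$, $x_1\neq x_0$, to a rational section of $\tilde\pi$ and apply Remark~\ref{prim} directly to $\tilde\pi$, whereas the paper applies Remark~\ref{prim} to $\pi$ and then transfers torsion-freeness to $\tilde\pi$ by observing that the exceptional divisor is horizontal, so that $\Cl_{\tilde\pi}(\tilde X)\simeq\Cl_\pi(X)$. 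Both arguments work.

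The one genuine gap is in your verification of the hypothesis $\Cl(\bar{\tilde X}_\eta)\simeq\Cl(\tilde X_\eta)$ of Corollary~\ref{main:cor}. You assert that, because $\tilde\pi^{-1}(T_Y)$ is a trivial family, the class groups of the generic fiber $\tilde X_0\times_\cc\cc(Y)$ and of the geometric generic fiber $\tilde X_0\times_\cc\kb$ are ``identified with $\Cl(\tilde X_0)$''. This is not a formal consequence of triviality: for a trivial family $Z\times T_Y\to T_Y$ the pullback $\Cl(Z)\to\Cl(Z\times_\cc\kb)$ need not be surjective --- take $Z$ an elliptic curve, or any $Z$ whose resolution has nontrivial $\Pic^0$, and new divisor classes appear over the large field $\kb=\overline{\cc(Y)}$. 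The identification is true in the present situation only because $\tilde X_0$ is a rational variety (a blow-up of a toric variety), and that needs to be said and used. The paper avoids the issue by instead proving (Lemma~\ref{clsur}) that the restriction $\Cl(\tilde X)\to\Cl(\tilde X_0)$ is surjective, using torus-invariant divisors; since this restriction factors through $\Cl(\tilde X_\eta)\hookrightarrow\Cl(\bar{\tilde X}_\eta)\simeq\Cl(\tilde X_0)$, the inclusion is forced to be an isomorphism. To complete your argument you should either justify that $\Cl(\tilde X_0)$ does not grow under extension of the algebraically closed base field, or replace this step by the surjectivity statement of Lemma~\ref{clsur}.
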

\begin{proof}
By Remark~\ref{rem:mori},
it is enough to show that the toric morphism
$\tilde\pi \colon \tilde X\to Y$
satisfies the hypotheses of
Corollary~\ref{main:cor}.
By construction, the varieties $\tilde X$ 
and $Y$ are both normal, complete 
and $\tilde\pi$ is surjective.
The divisor class group $\Cl(\tilde X)$
is finitely generated, being $X$ toric.
All the fibers of $\tilde\pi$ over $T_Y$
are isomorphic to $\tilde X_0$ because
the point $x_0\in X_0$ is general,
and in particular they are connected
and irreducible. 
We now claim that the group
of vertical classes $\Cl_{\tilde\pi}(\tilde X)$
is torsion free.
Indeed, first of all observe that
the restriction $\pi|_{T_X}\colon T_X \to T_Y$
is a surjective homomorphism
of tori and thus it admits a section,
which gives a rational section
of $\pi\colon X\to Y$. Therefore,
by Remark~\ref{prim} (and the assumption
$\Cl(Y)$ free), we deduce that
$\Cl_\pi(X)$ is torsion-free. 
Moreover, if we denote by $\tilde X_\eta$
the generic fiber of $\tilde\pi$, we have 
that the exceptional divisor of
$\phi$ restricts to the exceptional
divisor of $\tilde X_\eta\to X_\eta$.
This gives an isomorphism 
$\Cl_{\tilde\pi}(\tilde X)
\simeq \Cl_\pi(X)$, and hence the claim.
Finally the pullback homomorphism $\Cl(\tilde X)
\to \Cl(\tilde X_0)$ is surjective
by Lemma~\ref{clsur}, and
this implies that the divisor 
class group of the generic geometric
fiber is isomorphic to the divisor class group
of $\tilde X_0$.
\end{proof}

\begin{lemma}
\label{clsur}
With the notation of Construction~\ref{con:tor},
the pullback homomorphism 
$\Cl(\tilde X)\to\Cl(\tilde X_0)$
is surjective.
\end{lemma}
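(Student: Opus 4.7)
The plan is to exhibit every class in $\Cl(\tilde X_0)$ as the restriction of a class in $\Cl(\tilde X)$, by combining the blowup formula for a smooth center with the toric description of $X_0\subset X$.

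Since $\phi_0:=\phi|_{\tilde X_0}\colon\tilde X_0\to X_0$ is the blowing-up of the normal toric variety $X_0$ at the general (hence smooth) point $x_0$, a standard computation on the class group of a blowup at a smooth point gives
\[
 \Cl(\tilde X_0)=\phi_0^*\,\Cl(X_0)+\mathbb Z\cdot[\tilde E_0],
\]
where $\tilde E_0\subset\tilde X_0$ is the exceptional divisor of $\phi_0$. It therefore suffices to lift each of the two summands to $\Cl(\tilde X)$ under the restriction map $\Cl(\tilde X)\to\Cl(\tilde X_0)$. The class $[\tilde E_0]$ lifts at once: the blowup center $Z=\overline{\{x_0\}\times T_Y}$ of $\phi$ meets $X_0$ exactly in $\{x_0\}$, so the exceptional divisor $\tilde E\subset\tilde X$ of $\phi$ satisfies $\tilde E\cap\tilde X_0=\tilde E_0$, and hence $[\tilde E]$ restricts to $[\tilde E_0]$.

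For the second summand, I would use the toric description of $X_0$ (cf.~\cite{CLS}): the fan of $X_0$ is the subfan of the fan of $X$ consisting of the cones contained in the kernel of the lattice homomorphism inducing $\pi$. Consequently $\Cl(X_0)$ is generated by the classes $[D_\rho\cap X_0]$ for those torus-invariant prime divisors $D_\rho$ of $X$ whose ray $\rho$ lies in this kernel. Since $x_0$ lies in the open torus orbit of $X_0$, none of the intersections $D_\rho\cap X_0$ passes through $x_0$; and since each torus-invariant prime divisor $D_\rho$ of $X$ avoids the big torus $T_X$, while $Z$ lies generically in $T_X$, no $D_\rho$ contains $Z$. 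Hence $\phi^*[D_\rho]$ is represented by the strict transform $\tilde D_\rho\subset\tilde X$, whose restriction to $\tilde X_0$ is the strict transform of $D_\rho\cap X_0$ in $\tilde X_0$, which represents $\phi_0^*[D_\rho\cap X_0]$. Letting $\rho$ vary produces the required lifts of a generating set of $\phi_0^*\,\Cl(X_0)$.

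The main technical point is justifying that the restriction map sends $[\tilde D_\rho]$ and $[\tilde E]$ to the classes claimed on $\tilde X_0$; this is not automatic since $\tilde X_0\subset\tilde X$ has codimension $\dim T_Y>0$, but the general position of $x_0$ inside $X_0$ (so boundary divisors of $X_0$ miss $x_0$) and the containment $Z\subset T_X$ generically (so boundary divisors of $X$ do not contain $Z$), both consequences of the toric setup, are exactly what is needed for strict transforms and set-theoretic restrictions to be identified on the nose.
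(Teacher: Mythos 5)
Your proposal is correct and follows essentially the same route as the paper: the paper reduces immediately to the surjectivity of $\Cl(X)\to\Cl(X_0)$ (implicitly via the same blowup decomposition you make explicit, with the exceptional divisor of $\phi$ restricting to that of $\phi_0$) and then observes that $\Sigma_0$ is a subfan of $\Sigma_X$, so every torus-invariant prime divisor of $X_0$ is the restriction of one on $X$. Your write-up merely spells out the blowup bookkeeping and the strict-transform identifications that the paper leaves implicit.
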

\begin{proof}
It suffices to show that the pullback
$\Cl(X)\to \Cl(X_0)$ is surjective.
To this end, let $N_X$ and $N_Y$
be the lattices of one-parameter
subgroups of $X$ and $Y$ respectively
and let $\Sigma_X$ and $\Sigma_Y$ be 
their defining fans.
The morphism $\pi\colon X\to Y$ induces 
a surjective homomorphism 
$\alpha\colon N_X\to N_Y$ and, 
by~\cite{CLS}*{\S 3.3}, the 
fiber $X_0$ is the toric variety 
whose defining fan is 
\[
 \Sigma_0 
 :=
 \{\sigma\in\Sigma_X\, :\, 
 \sigma\subseteq \ker(\alpha)_{\mathbb Q}\}.
\]
Each torus invariant prime divisor $D_0$ of
$X_0$ corresponds to a one-dimensional
cone $\tau_0\in\Sigma_0$. 
Since $\Sigma_0$ is a subfan of $\Sigma_X$,
$\tau_0$ belongs also to $\Sigma_X$ 
and thus $D_0$ is the restriction of a torus invariant 
prime divisor of $X$, which proves the statement.
\end{proof}
We conclude with an example involving 
weighted projective spaces.
\begin{example}
Let $\pp(\mathbf a) $ be a weighted projective
space whose blowing-up at a general point
has non-finitely generated Cox ring
(see for instance~\cites{GKK,GKK1,He,LU}), 
and let $\Sigma_0\subseteq (N_0)_{\mathbb Q}$
be a defining fan for $\pp(\mathbf a)$.
Let $N := N_0\oplus\mathbb Z$. Given
$v\in N_0$ define the fan $\Sigma\subseteq
N_{\mathbb Q}$
whose maximal cones are
\[
 \{
 {\rm cone}(\sigma,(0,1)),
 {\rm cone}(\sigma,(v,-1))
 \, :\,
 \sigma\in \Sigma_0^{\rm max}\}.
\]
The projection $N\to \mathbb Z$ induces
a morphism $\pi\colon X(\Sigma)\to\mathbb P^1$,
whose general fiber is $\pp(\mathbf a)$.
Let $\tilde X$ be the blowing-up of
$X(\Sigma)$ along the image of a
rational section of $\pi$ passing 
through a general point of
$X_0$. By Proposition~\ref{notfg} 
we conclude that the Cox ring of 
$\tilde X$ is not finitely generated.
\end{example}

\begin{bibdiv}
\begin{biblist}

\bib{AHL}{article}{
   author={Artebani, Michela},
   author={Hausen, J\"urgen},
   author={Laface, Antonio},
   title={On Cox rings of K3 surfaces},
   journal={Compos. Math.},
   volume={146},
   date={2010},
   number={4},
   pages={964--998},
   issn={0010-437X},
}

\bib{ADHL}{book}{
   author={Arzhantsev, Ivan},
   author={Derenthal, Ulrich},
   author={Hausen, J{\"u}rgen},
   author={Laface, Antonio},
   title={Cox rings},
   series={Cambridge Studies in Advanced Mathematics},
   volume={144},
   publisher={Cambridge University Press, Cambridge},
   date={2015},
   pages={viii+530},
   isbn={978-1-107-02462-5},
}

\bib{Bak}{article}{
   author={B\"aker, Hendrik},
   title={Good quotients of Mori dream spaces},
   journal={Proc. Amer. Math. Soc.},
   volume={139},
   date={2011},
   number={9},
   pages={3135--3139},
   issn={0002-9939},
}

\bib{ct}{article}{
   author={Castravet, Ana-Maria},
   author={Tevelev, Jenia},
   title={Hilbert's 14th problem and Cox rings},
   journal={Compos. Math.},
   volume={142},
   date={2006},
   number={6},
   pages={1479--1498},
   issn={0010-437X},
}

\bib{CLS}{book}{
    AUTHOR = {Cox, David A.},
    AUTHOR = {Little, John B.},
    AUTHOR = {Schenck, Henry K.},
     TITLE = {Toric varieties},
    SERIES = {Graduate Studies in Mathematics},
    VOLUME = {124},
 PUBLISHER = {American Mathematical Society, Providence, RI},
      YEAR = {2011},
     PAGES = {xxiv+841},
      ISBN = {978-0-8218-4819-7},
       DOI = {10.1090/gsm/124},
       URL = {http://dx.doi.org/10.1090/gsm/124},
}

\bib{dp}{article}{
   author={Derenthal, Ulrich},
   author={Pieropan, Marta},
   title={Cox rings over nonclosed fields},
     JOURNAL = {J. London Math. Soc.},
    VOLUME = {99},
      YEAR = {2019},
    NUMBER = {2},
     PAGES = {447--476},
}

\bib{GKK}{article}{
    AUTHOR = {Gonz\'{a}lez, Jos\'{e} Luis},
    AUTHOR = {Karu, Kalle},
     TITLE = {Some non-finitely generated {C}ox rings},
   JOURNAL = {Compos. Math.},
    VOLUME = {152},
      YEAR = {2016},
    NUMBER = {5},
     PAGES = {984--996},
      ISSN = {0010-437X},
}

\bib{GKK1}{article}{
    AUTHOR = {Gonz\'{a}lez, Jos\'{e} Luis},
    AUTHOR = {Karu, Kalle},
   title={Examples of non-finitely generated Cox rings},
   date={2017},
    JOURNAL = {arXiv:1708.09064},
    EPRINT =   {https://arxiv.org/pdf/1708.09064.pdf},
}

\bib{Har}{book}{
   author={Hartshorne, Robin},
   title={Algebraic geometry},
   note={Graduate Texts in Mathematics, No. 52},
   publisher={Springer-Verlag},
   place={New York},
   date={1977},
   pages={xvi+496},
   isbn={0-387-90244-9},
}

\bib{ht}{article}{
   author={Hassett, Brendan},
   author={Tschinkel, Yuri},
   title={Universal torsors and Cox rings},
   conference={
      title={Arithmetic of higher-dimensional algebraic varieties},
      address={Palo Alto, CA},
      date={2002},
   },
   book={
      series={Progr. Math.},
      volume={226},
      publisher={Birkh\"{a}user Boston, Boston, MA},
   },
   date={2004},
   pages={149--173},
}

\bib{HKL}{article}{
   author={Hausen, J\"urgen},
   author={Keicher, Simon},
   author={Laface, Antonio},
   title={Computing Cox rings},
   journal={Math. Comp.},
   volume={85},
   date={2016},
   number={297},
   pages={467--502},
   issn={0025-5718},
}

\bib{HaS}{article}{
   author={Hausen, J\"urgen},
   author={S\"uss , Hendrik},
   title={The Cox ring of an algebraic variety with torus action},
   journal={Adv. Math.},
   volume={225},
   date={2010},
   number={2},
   pages={977--1012},
   issn={0001-8708},
}

\bib{He}{article}{
   author={He, Zhuang},
   title={Mori dream spaces and blow-ups of weighted
projective spaces},
 JOURNAL =  {arXiv:1803.11536},
  EPRINT =   {https://arxiv.org/pdf/1708.09064.pdf},
}

\bib{HS}{book}{
   author={Hindry, Marc},
   author={Silverman, Joseph H.},
   title={Diophantine geometry},
   series={Graduate Texts in Mathematics},
   volume={201},
   note={An introduction},
   publisher={Springer-Verlag, New York},
   date={2000},
   pages={xiv+558},
   isbn={0-387-98975-7},
   isbn={0-387-98981-1},
}

\bib{laz}{book}{
   author={Lazarsfeld, Robert},
   title={Positivity in algebraic geometry. I},
   series={Ergebnisse der Mathematik und ihrer Grenzgebiete. 3. Folge. A
   Series of Modern Surveys in Mathematics [Results in Mathematics and
   Related Areas. 3rd Series. A Series of Modern Surveys in Mathematics]},
   volume={48},
   note={Classical setting: line bundles and linear series},
   publisher={Springer-Verlag, Berlin},
   date={2004},
   pages={xviii+387},
   isbn={3-540-22533-1},
}

\bib{LU}{article}{
   author={Laface, Antonio},
   author={Ugaglia, Luca}, 
   title={On base loci of higher fundamental 
forms of toric varieties},
  date = {2019},
 JOURNAL =  {arXiv:1904.01511},
  EPRINT =   {https://arxiv.org/pdf/1904.01511.pdf},
}

\bib{ok}{article}{
   author={Okawa, Shinnosuke},
   title={On images of Mori dream spaces},
   journal={Math. Ann.},
   volume={364},
   date={2016},
   number={3-4},
   pages={1315--1342},
   issn={0025-5831},
}

\bib{sil}{book}{
    AUTHOR = {Silverman, Joseph H.},
     TITLE = {The arithmetic of elliptic curves},
    SERIES = {Graduate Texts in Mathematics},
    VOLUME = {106},
   EDITION = {Second},
 PUBLISHER = {Springer, Dordrecht},
      YEAR = {2009},
     PAGES = {xx+513},
      ISBN = {978-0-387-09493-9},
}

\bib{vi}{article}{
    AUTHOR = {Vial, Charles},
     TITLE = {Algebraic cycles and fibrations},
   JOURNAL = {Doc. Math.},
  JOURNAL = {Documenta Mathematica},
    VOLUME = {18},
      YEAR = {2013},
     PAGES = {1521--1553},
}

\end{biblist}
\end{bibdiv}

\end{document}